\pgfplotsset{
    tick align=outside,
    x grid style={white},
    xmajorgrids,
    y grid style={white},
    ymajorgrids,
    axis line style={white},
    axis background/.style={fill=white!92!black},
    legend style={draw=white, fill=white},
    legend cell align={left}
}
\newtheorem{thm}{Theorem}
\newtheorem{defn}[thm]{Definition}
\newtheorem{lemma}[thm]{Lemma}
\newtheorem{prop}[thm]{Proposition}
\newtheorem{remark}[thm]{Remark}
\newtheorem{question}[thm]{Question}
\newcommand{\T}{\mathbb{T}}
\newcommand{\R}{\mathbb{R}}
\newcommand{\C}{\mathbb{C}}
\newcommand{\Z}{\mathbb{Z}}
\newcommand{\fsL}{\textnormal{L}} 
\newcommand{\conj}[1]{\overline{#1}} 
\newcommand{\ip}[2]{\langle #1,#2\rangle} 
\newcommand{\sip}[2]{\left\langle #1,#2\right\rangle} 
\newcommand{\dd}{\mathrm{d}}
\newcommand{\ii}{\mathrm{i}}
\newcommand{\ee}{\mathrm{e}}
\newcommand{\id}{\mathrm{Id}}
\newcommand{\init}{\mathrm{in}}
\newcommand{\const}{\mathrm{const}}
\newcommand{\sol}[3]{\mathcal{S}_{#1}^{{#2}\to{#3}}}
\title{Finding the jump rate for fastest decay in the Goldstein-Taylor model}
\author{Helge Dietert\footnote{Email: \href{mailto:helge.dietert@imj-prg.fr}{helge.dietert@imj-prg.fr}\newline
    Universit\'e de Paris and Sorbonne Universit\'e, CNRS,
    Institut de Math\'ematiques de Jussieu-Paris Rive Gauche (IMJ-PRG),
    F-75013, Paris, France\newline
    Currently on leave and working at\newline
    Institut f\"ur Mathematik, Universit\"at Leipzig, D-04103 Leipzig, Germany
  } \and Josephine Evans\footnote{Email:
    \href{mailto:josephine.evans@warwick.ac.uk}{josephine.evans@warwick.ac.uk}\newline
    Warwick Mathematics Institute, University of Warwick, UK}}
\begin{document}
\maketitle
\begin{abstract}
  For hypocoercive linear kinetic equations we first formulate an
  optimisation problem on a spatially dependent jump rate in order to
  find the fastest decay rate of perturbations. In the
  Goldstein-Taylor model we show (i) that for a locally optimal jump
  rate the spectral gap is determined by multiple, possible
  degenerate, eigenvectors and (ii) that globally the fastest decay is
  obtained with a spatially homogeneous jump rate. Our proofs rely on
  a connection to damped wave equations and a relationship to the
  spectral theory of Schrödinger operators.
\end{abstract}

\textit{Keywords:} Hypocoercivity; spatial weight; optimal control; Goldstein-Taylor model; wave equation

\section{Introduction}
A typical linear kinetic equation takes the form
\begin{equation}
  \label{eq:typical-kinetic}
  \partial_t f + Tf = \sigma(x)\; C(f)
\end{equation}
for a density $f=f(t,x,v)$ at time $t$ over the phase space consisting
of a spatial position $x$ and a velocity $v$ where $T$ is a transport
operator, $\sigma$ is a spatial weight, and $C$ is a collision
operator driving the system to thermal equilibrium.

The theory of hypocoercivity, \cite{V09, DMS15}, ensures, by a variety of proofs, that the
equilibrium is reached with an exponential rate. The spectral gap
$\lambda$ limiting the decay behaves for a constant $\sigma$ typically
as indicated in \cref{fig:typical-decay-rate}.  Here we see two
distinct regimes:
\begin{enumerate}
\item For small jump rates $\sigma$ the spectral gap scales with
  $\sigma$. Here the decay is limited by the thermalisation rate of
  the velocity variable so that a faster jump rate improves the
  spectral gap.
\item For bigger jump rates $\sigma$ the spectral gap behaves like
  $\sigma^{-1}$. In this regime the decay rate is limited by the
  spatial diffusion. Here a faster decay rate means slower decay as
  the effective spatial transport decreases by the law of large
  numbers.
\end{enumerate}

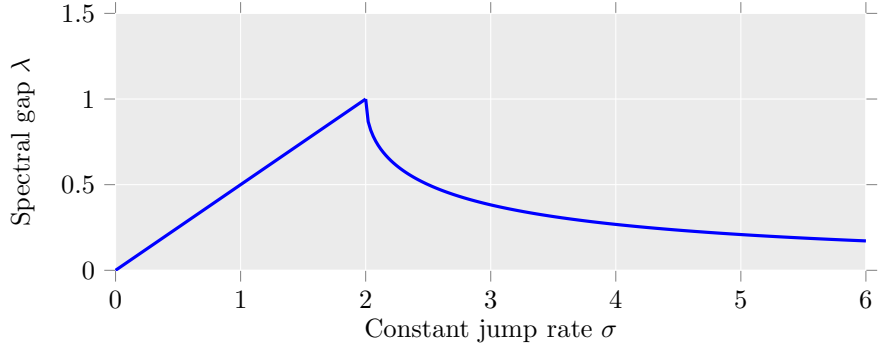
\begin{figure}[h]
  \centering
  \begin{tikzpicture}
    \begin{axis}[
      xlabel={Constant jump rate \(\sigma\)},
      xmin=0.0, xmax=6.0,
      ylabel={Spectral gap \(\lambda\)},
      ymin=0.0, ymax=1.5,
      height=5cm,width=0.7\textwidth
      ]
      \addplot [domain=0:2,very thick,blue] {x/2};
      \addplot [domain=2:6,samples=200,very thick,blue] {(x/2)-sqrt(x*x/4-1)};
    \end{axis}
  \end{tikzpicture}
  \caption{Typical decay rate depending on the noise strength. For a
    given constant $\sigma$, we plot the spectral gap for typical
    models of \eqref{eq:typical-kinetic}.}
  \label{fig:typical-decay-rate}
\end{figure}

This motivates the main question of this research.
\begin{question}
  Can we combine spatial regions of large and small jump rates in
  order to obtain a faster decay rate? More generally, what is the
  jump rate $\sigma$ in order to find the largest spectral gap, i.e.\
  the fastest decay?
\end{question}

There have been several research works which fix $\sigma$ and find
bounds on the rate of convergence to equilibrium for the system; these
works fit into the general framework of \emph{hypocoercivity}. The
goal of this work is to understand the dependence of the rate of
convergence to equilibrium on $\sigma$ by studying the optimal control
problem of finding the choice of $\sigma$ which maximises the rate. We
believe this provides another direction to understand the precise
dependence of the decay rate on the parameters in the equation.  We
believe this approach has potential in a variety of other kinetic
equations:
\begin{itemize}
\item For kinetic equations with a confining potential one could
  investgate the interplay between jump rate and the confining
  potential.
\item For equations posed on a domain with boundary one could
  investigate, in a similar way, the dependence of the rate on the shape
  of the domain and boundary conditions. This could produce results
  similar to the celebrated Faber-Krahn inequality.
\item This problem is related to the control of nuclear reactors as
  for the radiative transfer equation $\sigma$ is related to the
  presence or absence of control rods.
\item In our perturbation result we show that the optimal $\sigma$
  must occur simultaneously with a degeneracy in the eigenspace
  associated to the spectral gap eigenvectors. We believe this might
  point to connections between the optimal $\sigma$ and symmetries
  present in the equation.
\end{itemize}

Apart from applications in kinetic theory, Markov Chain Monte Carlo
(MCMC) algorithms are a main motivation. In applications of Bayesian
statistics, one needs to calculate the posterior distribution which is
given up to a normalisation factor by
\begin{equation*}
  \ee^{-\phi(x)}.
\end{equation*}
For a high-dimensional problems an explicit computation is prohibitively
expensive and a common solution is to construct a stochastic process
$Z$ which converges to the sought distribution and to sample from that
process. One such a process is a diffusion process
\begin{equation*}
  \dd X_t = -\nabla_x \phi(X_t)\, \dd t + \dd W_t.
\end{equation*}
This procedure can sometimes be slow, and Hamiltonian MCMC (HMCMC) has
been developed as a way to increase the speed of convergence of these
algorithms, see \cite{EGZ19, BEZ20} for a rigorous proof of the
increase in speed and references within on HMCMC. The strategy of
Hamiltonian Markov Chain Monte Carlo is to look at the related kinetic
equation
\begin{equation*}
  \left\{
    \begin{aligned}
      \dd X_t &= V_t\, \dd t,\\
      \dd V_t &= - \nabla_x \phi(X_t)\, \dd t + \sigma(X_t)\;
      (\dd W_t - V_t\, \dd t)
    \end{aligned}
  \right.
\end{equation*}
which has the equilibrium distribution $M(v)\, \ee^{-\phi(x)}$ for the
velocity equilibrium $M(v)$ so that the sought distribution is
obtained by the spatial distribution. Here the intuitive idea is that
the kinetic equation yields a faster transport of the distribution
over large spatial distances. The previous analyses look at the case
of constant $\sigma$ and we now ask the further question whether the
speed of convergence of these processes can be increase by making
$\sigma$ spatially dependent. This has been investigated numerically
in statistics literature, for example in \cite{GC11}, where they
propose a version of the Metropolis adjusted Langevin algorithm (MALA)
which takes into account the geometry of $\phi$.

A very simple model to study the exponential decay of kinetic
equations is the one-dimensional Goldstein-Taylor model, which is
still actively studied as a test case for hypocoercive results
\cite{arnold-einav-signorello-woehrer-2021-non-homogeneous-goldstein-taylor}, and has been studied with $\sigma$ depending on $x$ in \cite{bernard-salvarani-2013-optimal-estimate-spectral-gap-degenerate-gt}, relating it to the work \textcite{lebeau-1994-equations-des-ondes-amorties}.
It is a special case of BGK models with only two velocities \(\pm
1\). Setting \(u=(t,x) = f(t,x,+1)\) and \(v(t,x) = f(t,x,-1)\) to the
respective spatial densities, the model writes
\begin{equation}
  \label{eq:goldstein-taylor}
  \left\{
    \begin{aligned}
      \partial_t u + \partial_x u &= \frac{\sigma(x)}{2}\; (v-u), \\
      \partial_t v - \partial_x v &= \frac{\sigma(x)}{2}\; (u-v),
    \end{aligned}
  \right.
\end{equation}
where we consider the spatial variable \(x\) in the torus \(\T\) with
length \(2\pi\).

As used before \cite{kac-1974-stochastic-model-telegraphers-equation,
  bernard-salvarani-2013-optimal-estimate-spectral-gap-degenerate-gt},
the one-dimensional case has the special feature that the kinetic
equation \eqref{eq:goldstein-taylor} is equivalent to a damped wave
equation by considering
\begin{equation}
  \label{eq:wave-variables}
  \rho(t,x) := \frac{u(t,x) + v(t,x)}{\sqrt 2}
  \qquad \text{and} \qquad
  j(t,x) := \frac{u(t,x) - v(t,x)}{\sqrt 2}.
\end{equation}
Then the Goldstein-Taylor model \eqref{eq:goldstein-taylor} can be
written as
\begin{equation}
  \label{eq:goldstein-taylor-wave}
  \left\{
    \begin{aligned}
      \partial_t \rho + \partial_xj &= 0, \\
      \partial_t j + \partial_x \rho &= -\sigma(x)\; j.
    \end{aligned}
  \right.
\end{equation}

In our results we want to characterise the convergence towards the
stationary state which is in the formulation
\eqref{eq:goldstein-taylor} given by \(u=v=\const\) or in the
formulation \eqref{eq:goldstein-taylor-wave} by \(\rho=\const\) and
\(j=0\). By the conservation of the mass \(\int_\T \rho(x)\, \dd x\),
the limiting state can be characterised and using the linearity it
therefore suffices to study the perturbation from the limiting space.

Working in \(\fsL^2\) we therefore consider the evolution in the space
\begin{equation}
  \label{eq:definition-l2p}
  \fsL^2_p = \{
  (\rho,j) \in \fsL^2(\T) : \int_\T \rho(x)\, \dd x = 0
  \}
\end{equation}
with the natural norm given by
\begin{equation*}
  \| (\rho,j) \|_2^2
  = \| \rho \|_2^2 + \| j \|_2^2.
\end{equation*}

The evolution in \(\fsL^2_p\) can be understood with a semigroup with
the generator \(A_{\sigma}\) (see \cref{thm:semigroup-setup} below).
As a first result we characterise a possible jump rate
\(\sigma=\sigma(x)\) by considering perturbations.
\begin{thm}
  \label{thm:perturbation}
  Suppose \(\sigma \in \fsL^\infty(\T)\) is such that the spectral gap
  in \(\fsL^2_p\) is locally maximised. Then the spectral gap is not
  determined by a simple eigenvalue.
\end{thm}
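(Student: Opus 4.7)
The plan is to apply analytic perturbation theory and derive a contradiction by exhibiting a perturbation direction that strictly increases the spectral gap.

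Suppose, for contradiction, that a simple, isolated eigenvalue $\mu_0$ of $A_\sigma$ attains the spectral abscissa $-\lambda$ and that no other eigenvalue has the same real part. Since $A_\sigma$ has real coefficients, $\bar\mu_0$ is also an eigenvalue with the same real part, so uniqueness forces $\mu_0\in\R$, and the associated eigenvector $\phi_0=(\rho_0,j_0)$ may be taken real. A direct calculation of the adjoint gives $A_\sigma^*(\rho^*,j^*)=(\partial_x j^*,\partial_x\rho^*-\sigma j^*)$, and one verifies from the eigenvalue equation for $\phi_0$ that $\chi_0:=(\rho_0,-j_0)$ satisfies $A_\sigma^*\chi_0=\mu_0\chi_0$; it is the corresponding left eigenvector.

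For a direction $\tau\in\fsL^\infty(\T)$, the perturbed generator is $A_{\sigma+\varepsilon\tau}=A_\sigma+\varepsilon B_\tau$ with $B_\tau(\rho,j)=(0,-\tau j)$. Kato's analytic perturbation theorem then provides an analytic family $\mu(\varepsilon)$ of simple eigenvalues extending $\mu_0$, with
\[
\mu'(0)=\frac{\langle\chi_0,B_\tau\phi_0\rangle}{\langle\chi_0,\phi_0\rangle}
=\frac{\int_\T\tau(x)\,j_0(x)^2\,\dd x}{\int_\T(\rho_0^2-j_0^2)\,\dd x},
\]
where the denominator is nonzero by biorthogonality (otherwise a nontrivial Jordan block would exist). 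Because $\mu_0$ is isolated and the remainder of the spectrum is bounded to its left, the perturbed spectral gap equals $-\mu(\varepsilon)$ for $\varepsilon$ small. Local maximality of the gap then forces $\mu'(0)\ge 0$ for every $\tau$, and, applied to $-\tau$, also $\mu'(0)\le 0$; hence $\mu'(0)=0$ for every $\tau\in\fsL^\infty(\T)$, which can only occur if $j_0\equiv 0$. The first eigenvalue equation $-\partial_x j_0=\mu_0\rho_0$, together with $\mu_0\ne 0$ (the gap being strictly positive in $\fsL^2_p$), then forces $\rho_0\equiv 0$, contradicting $\phi_0\ne 0$.

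The substantive content of the argument is the perturbation identity combined with the explicit identification $\chi_0=(\rho_0,-j_0)$, which renders the first variation of $\mu_0$ a transparent linear functional of $\tau$ with easily-controlled sign. The main technical obstacle is justifying the abstract perturbation framework on $\fsL^2_p$: one must verify that $A_\sigma$ has isolated eigenvalues of finite algebraic multiplicity near the spectral abscissa and that the rest of the spectrum stays uniformly separated under small $\fsL^\infty$-perturbations of $\sigma$, so that the perturbed gap is genuinely tracked by $\mu(\varepsilon)$; only with these ingredients in place does Kato's theorem apply and close the argument above.
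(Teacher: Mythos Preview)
Your strategy coincides with the paper's: both compute the first variation of the critical eigenvalue under $\sigma\mapsto\sigma+\epsilon\tau$ and arrive at a formula proportional to $\int_\T\tau\,j_0^2\,\dd x$. The paper extracts this through its shooting operator $M(\sigma,\lambda)$, Duhamel, and a Wronskian identity (obtaining $d=\int\eta\,j_1^2$ and, in a footnote, $c=\int(j_1^2-\rho_1^2)$), whereas you reach the same quotient more directly via Kato's formula after the clean observation that $\chi_0=(\rho_0,-j_0)$ is the adjoint eigenvector. That shortcut is a genuine simplification over the paper's route.

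There is, however, a real gap. You assert $\mu'(0)=0$ for \emph{every} $\tau\in\fsL^\infty(\T)$, but admissible perturbations must keep $\sigma+\epsilon\tau\ge 0$; two-sided variations (using both $\tau$ and $-\tau$) are available only for $\tau$ supported where $\sigma$ is bounded away from zero, and on $\{\sigma=0\}$ only one-sided directions are allowed. From your argument one legitimately deduces only $j_0=0$ a.e.\ on $\{\sigma>0\}$, not $j_0\equiv 0$. The paper faces the same issue and restricts the perturbation to a set $I\subset\{\sigma>0\}$ of positive measure. Your version can be repaired: since the eigenfunction is continuous, $\{j_0\ne 0\}$ is open and $\sigma=0$ a.e.\ there, so on each component $(a,b)$ the equation reduces to $j_0''=\mu_0^2 j_0$ with $j_0(a)=j_0(b)=0$, which for real $\mu_0\ne 0$ forces $j_0\equiv 0$ on $(a,b)$---a contradiction. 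A second, smaller point: your reduction to $\mu_0\in\R$ reads ``determined by a simple eigenvalue'' as ``exactly one eigenvalue on the gap line''; the paper's proof also treats the possibility of a complex simple $\lambda_0$ (hence a conjugate pair on that line) by analysing the complex phase of $j_1^2$, a case your real-eigenvector computation does not directly address.
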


Such a wave equation of a string has been studied in
\textcite{cox-zuazua-1994-rate-decay-damped-string} with fixed
ends. One aspect of their work is to characterise the eigenvalues
along the real axis by the spectrum of Schrödinger operators which
allows them to find the spatial damping \(\sigma\) minimising the
largest real eigenvalues, but they cannot consider the full spectral
gap. However, in our situation, and with our aims in mind, we are able
to go further by associating a different Schrödinger operator. In this
newly associated Schrödinger operator we can obtain the result by
looking at the second eigenvalue and exploiting the translation
symmetry; this is key to our bound when $\|\sigma\|_1$ is large. Due
to the different boundary values we capture the true spectral gap in
contrast to \textcite{cox-zuazua-1994-rate-decay-damped-string} where
their result for the eigenvalues along the real axis does not capture
the spectral gap. This yields the following theorem:
\begin{thm}
  \label{thm:global-optimal}
  For the Goldstein-Taylor model \eqref{eq:goldstein-taylor}, the
  largest spectral gap in \(\fsL^2_p\) is obtained with the constant
  \(\sigma=2\) giving the spectral gap \(1\).
\end{thm}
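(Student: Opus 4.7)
The strategy is to translate the eigenvalue problem into a Schr\"odinger problem, extract spectral-gap information from its second eigenvalue, and use translation invariance on the torus to close the bound in the regime where $\|\sigma\|_1$ is large.

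\paragraph{Schr\"odinger reduction and the constant candidate.} For $\lambda \neq 0$ the eigenvalue system $\lambda\rho + \partial_x j = 0$, $\lambda j + \partial_x\rho + \sigma j = 0$ lets one eliminate $\rho = -\partial_x j/\lambda$, giving the scalar periodic problem
\[
-\partial_x^2 j + \lambda\sigma(x)\, j = -\lambda^2 j \qquad \text{on } \T.
\]
Hence $\lambda \neq 0$ is an eigenvalue of $A_\sigma$ if and only if $-\lambda^2$ lies in the spectrum of the Schr\"odinger operator $L_\lambda := -\partial_x^2 + \lambda\sigma(x)$ on $\fsL^2(\T)$; this is the ``different Schr\"odinger operator'' alluded to after \cref{thm:perturbation}, and $\lambda$ appears both in the potential and as the spectral parameter. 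For $\sigma \equiv 2$ one obtains $\lambda = -1 \pm \sqrt{1-k^2}$, so the spectral gap equals $1$ with a double eigenvalue at $\lambda = -1$. It remains to show $\mathrm{gap}(\sigma) \leq 1$ for every $\sigma$, i.e.\ to produce, for each $\sigma$, some $A_\sigma$-eigenvalue with $\mathrm{Re}\,\lambda \geq -1$.

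\paragraph{Upper bound via the second eigenvalue and translation.} Denote by $\mu_0(\lambda) \leq \mu_1(\lambda) \leq \dots$ the real eigenvalues of the self-adjoint operator $L_\lambda$ for real $\lambda$, and set $g_1(\lambda) := \mu_1(\lambda) + \lambda^2$. A zero of $g_1$ in $[-1,0)$ produces an $A_\sigma$-eigenvalue with $\mathrm{Re}\,\lambda \geq -1$; since $g_1(0) = \mu_1(0) = 1 > 0$, by the intermediate value theorem it suffices to establish $\mu_1(L_{-1}) \leq -1$. I would estimate the left-hand side by min-max against the two-dimensional trial space $W_\theta := \mathrm{span}(\cos(x+\theta),\sin(x+\theta))$. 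On $W_\theta$ the operator $-\partial_x^2$ reduces to the identity, and a trace–determinant computation gives
\[
\mathrm{tr}\bigl(\sigma|_{W_\theta}\bigr) = 2\bar\sigma, \qquad \det\bigl(\sigma|_{W_\theta}\bigr) = \bar\sigma^2 - |\hat\sigma_2|^2,
\]
both \emph{independent of $\theta$}: this translation invariance is precisely the symmetry that the introduction advertises as the improvement over \cite{cox-zuazua-1994-rate-decay-damped-string}. Diagonalising yields eigenvalues $\bar\sigma \pm |\hat\sigma_2|$ for $\sigma|_{W_\theta}$, and hence
\[
\mu_1(L_{-1}) \leq 1 - \bar\sigma + |\hat\sigma_2|,
\]
which is $\leq -1$ whenever $\bar\sigma - |\hat\sigma_2| \geq 2$. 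Since $|\hat\sigma_2| \leq \bar\sigma$ for $\sigma \geq 0$, this closes the argument in the regime where $\|\sigma\|_1 = 2\pi\bar\sigma$ is sufficiently large.

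\paragraph{Main obstacle: the remaining regime.} The delicate part is the complementary regime, where $\bar\sigma$ is small or $|\hat\sigma_2|$ is comparable to $\bar\sigma$ (this includes very concentrated $\sigma$, e.g.\ a narrow bump of fixed total mass). Here $g_1$ may have no real zero in $[-1,0]$ and the relevant $A_\sigma$-eigenvalue is complex, coming from a branch bifurcating from the undamped imaginary eigenvalues $\pm\ii k$. The natural estimate multiplies the Schr\"odinger equation by $\bar j$ and integrates to obtain
\[
\int|\partial_x j|^2 + \lambda\int\sigma|j|^2 + \lambda^2 \int|j|^2 = 0,
\]
so that complex $\lambda$ satisfies $\mathrm{Re}\,\lambda = -\tfrac{1}{2}\int\sigma|j|^2/\int|j|^2$; the task becomes to exhibit, for each $\sigma$, a complex eigenfunction $j$ with $\int\sigma|j|^2 \leq 2\int|j|^2$, and to propagate this bound along the branch from the undamped case $\sigma \equiv 0$ (where $j = \ee^{\pm\ii x}$ realises equality with quotient $\bar\sigma$). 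Making this continuation argument uniform in $\sigma$ — and matching it to the $\mu_1$-branch bound at the crossover where both mechanisms compete — is where I expect the main technical work to lie; the perturbation result \cref{thm:perturbation} likely re-enters here through a compactness/rigidity argument excluding any strictly better non-constant $\sigma$.
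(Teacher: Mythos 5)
Your Schr\"odinger reduction is the right idea and matches the paper's mechanism (the paper writes it as \(H_{\sigma,\lambda} = -\partial_x^2 + \lambda(\lambda+\sigma)\), looking for a zero eigenvalue; this is your \(L_\lambda + \lambda^2\)). But the execution has two genuine gaps, both of which the paper closes differently from what you speculate.

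First, you never address \(\|\sigma\|_1 \le 4\pi\) (i.e.\ \(\bar\sigma\le 2\)). Your estimate \(\mu_1(L_{-1}) \le 1 - \bar\sigma + |\hat\sigma_2|\) can only be \(\le -1\) if \(\bar\sigma \ge 2\), so the intermediate-value step cannot even start in that regime, and no continuation from the undamped case is needed. The paper disposes of this case before the Schr\"odinger argument begins: by \cref{thm:semigroup-setup} there is always an eigenvalue with real part arbitrarily close to \(-\|\sigma\|_1/(4\pi)\) (the ``velocity relaxation'' eigenvalue), so the spectral gap is at most \(\|\sigma\|_1/(4\pi) \le 1\) outright. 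The Schr\"odinger argument is only needed for \(\|\sigma\|_1 > 4\pi\).

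Second, for \(\|\sigma\|_1 > 4\pi\), evaluating at the \emph{fixed} value \(\lambda = -1\) and testing against \(W_\theta = \mathrm{span}(\cos(x+\theta),\sin(x+\theta))\) leaves the unwanted \(+|\hat\sigma_2|\) term, which you correctly note defeats the bound for concentrated \(\sigma\). The paper's fix is not a continuation or compactness argument but a pair of concrete choices. It evaluates at the \(\sigma\)-dependent point \(\lambda_s\), the larger root of \(\lambda^2 + \tfrac{\|\sigma\|_1}{2\pi}\lambda + 1 = 0\), which is exactly the \(n=1\) eigenvalue of the \emph{constant}-\(\sigma\) problem; and it tests with \(j_1 = 1\) and \(j_2 = \sin(x-\phi)\) where the single shift \(\phi\) is chosen so that \(\int_\T \sigma(x)\cos(2(x-\phi))\,\dd x = 0\). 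With that shift, \(\ip{j_2}{H_{\sigma,\lambda_s}j_2}\) collapses (by the characteristic equation satisfied by \(\lambda_s\)) to \emph{exactly zero}, with no residual \(|\hat\sigma_2|\) term, while \(\ip{j_1}{H_{\sigma,\lambda_s}j_1}<0\). So the ``translation symmetry'' the introduction advertises is being used to pick \(\phi\), not to optimise over your \(W_\theta\) (which, as you found, gives a \(\theta\)-independent and insufficient bound). Your guess that \cref{thm:perturbation} re-enters via a rigidity argument is not how the proof goes; that theorem plays no role in the global optimum.

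A smaller point: you prove \(\mathrm{gap}(\sigma)\le 1\) would follow from exhibiting an eigenvalue with \(\Re\lambda\ge -1\). The paper in fact produces, when \(\|\sigma\|_1 > 4\pi\), an eigenvalue with \(\Re\lambda \ge \lambda_s > -1\), which is strictly stronger and directly compares \(\sigma\) to the constant with the same mass.
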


In the context of the wave equation, the corresponding question of the
spatially dependent damping has been studied before and shows that the competing
effects of the jump rate are more intricate as the result of a
constant damping in \cref{thm:global-optimal} might suggests. So it is
noted by \textcite{castro-cox-2001-achieving-arbitrarly-large-decay}
that an arbitrary large decay rate can be obtained in the case of
fixed-ends and a spatialy dependent damping diverging towards the boundary. Taking
the damping as an indicator function of a set \(\omega\) and optimising
the set \(\omega\), the competition between the effects is non-trivial
and yields in general non-existence of optimal sets
\cite{hebrard-henrott-2003-optimal-shape-stabilization-string,
  muench-pedregal-periago-2006-optimal-design-damping-set}. Phrased in
terms of the related observability condition it has been further
studied in
\cite{privat-trelat-zuazua-2013-optimal-observation-one-dimensional-wave}. The
problem has also been formulated in terms of the overall energy
\cite{cox-1998-designing-optimal-enegery-absorption} and from a
numerical side the problem is also studied by e.g.\
\cite{fahroo-ito-2006-optimal-absorption-design}. It is also studied in more complex geometries in \textcite{lebeau-1994-equations-des-ondes-amorties}.

The study of the decay rate for the presented class of systems is a
wide field ranging from works in kinetic theory
\cite{achleitner-arnold-carlen-2016-linear-hypocoercive-bgk,
  achleitner-arnold-signorello-2019-optimal-decay-estimates-model-decomposition,
  arnold-einav-signorello-woehrer-2021-non-homogeneous-goldstein-taylor}
to the wave equation
\cite{haraux-lieard-privat-2016-observability-constant-one-dimensional-wave}
to numerical methods
\cite{ammari-castro-2019-numerical-approximation-best-decay-rate}.

As a first step, we formulate in \cref{sec:spectral} the spectral
problem precisely and also shows that the spectral gap determines
the decay rate of the $L^2$ norm under the flow of our equation. Furthermore, we find eigenvalues corresponding to
the decay rate \(\|\sigma\|_1/(4\pi)\) of the velocity distribution
alone.
\begin{prop}
  \label{thm:semigroup-setup}
  For non-negative \(\sigma \in \fsL^\infty\) the closed linear
  operator \(A_\sigma\) defined by
  \begin{equation}
    \label{eq:definition-generator}
    A_\sigma
    \begin{pmatrix}
      \rho \\ j
    \end{pmatrix}
    =
    \begin{pmatrix}
      0 & -\partial_x \\
      -\partial_x & -\sigma(x)
    \end{pmatrix}
    \begin{pmatrix}
      \rho \\ j
    \end{pmatrix}
  \end{equation}
  on \(\fsL^2_p\) generates a contraction semigroup
  \((\ee^{tA_\sigma})_t\) matching the evolution
  \eqref{eq:goldstein-taylor-wave}.

  For any \(\epsilon > 0\) there exists an eigenvalue
  \(\lambda \in \C\) with
  \(|\Re \lambda - \|\sigma\|_1/(4\pi)| \le \epsilon\) and there are
  at most finitely many eigenvalues in the halfspace
  \(\{ \lambda \in \C : \Re \lambda \ge \|\sigma\|_1/(4\pi) +
  \epsilon\}\).

  If for \(a > - \| \sigma \|_1/(4\pi)\) there exists no eigenvalue
  with \(\Re \lambda \ge a\), then we have the growth bound
  \begin{equation*}
    \| \ee^{t A_{\sigma}} \|
    \lesssim \ee^{t a}
    \qquad \forall t \ge 0.
  \end{equation*}
\end{prop}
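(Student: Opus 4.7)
The plan is to treat the three assertions in turn, exploiting the first-order system structure of \eqref{eq:goldstein-taylor-wave} together with boundedness of the damping $\sigma$.

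For the semigroup generation, I would apply the Lumer--Phillips theorem on the natural domain, namely those pairs $(\rho,j)$ in $\fsH^1(\T) \times \fsH^1(\T)$ whose $\rho$-component has vanishing mean. Integration by parts on the torus gives
\begin{equation*}
\Re \langle A_\sigma (\rho,j), (\rho,j)\rangle = -\int_\T \sigma(x) \, |j(x)|^2 \, \dd x \le 0,
\end{equation*}
so $A_\sigma$ is dissipative. Range density of $\xi\id - A_\sigma$ for sufficiently large real $\xi$ follows by perturbing the $\sigma = 0$ case, where the operator is skew-adjoint and surjectivity is immediate by Stone's theorem, with the bounded multiplier $\sigma(x)\cdot$. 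Matching the resulting semigroup with \eqref{eq:goldstein-taylor-wave} is then immediate from the definition.

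For the asymptotic eigenvalue, I would pass to a Schr\"odinger-type formulation. Eliminating $\hat\rho = -\lambda^{-1}\partial_x \hat j$ from the eigenproblem yields
\begin{equation*}
-\partial_x^2 \hat j + \lambda\, \sigma(x)\, \hat j = -\lambda^2\, \hat j
\end{equation*}
with $\hat j$ periodic. A WKB ansatz $\hat j(x) = \exp(\ii\omega x + f(x))$ for $\lambda = -\mu + \ii\omega$ with $\omega \to \infty$ matches the eikonal equation at leading order in $\omega^{-1}$ precisely when $\Re f'(x) = \sigma(x)/2 - \mu$; periodicity on $\T$ then forces $\mu = \|\sigma\|_1/(4\pi)$. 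I would then extract a genuine eigenvalue within $O(\omega^{-1})$ of this formal value by a contraction-mapping argument on the monodromy map of the underlying $2 \times 2$ first-order system, valid for all $|\omega|$ sufficiently large, producing a sequence of eigenvalues accumulating on the vertical line $\Re \lambda = -\|\sigma\|_1/(4\pi)$.

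The finiteness claim then follows because $A_\sigma$ has compact resolvent (the skew-adjoint $\sigma = 0$ case already has compact resolvent on $\fsL^2_p$ by Fourier decomposition plus Rellich, and bounded perturbation preserves this), so the spectrum is purely discrete and the WKB analysis above rules out accumulation points to the right of $\Re \lambda = -\|\sigma\|_1/(4\pi)$. For the growth bound I would invoke Gearhart--Pr\"uss: on the Hilbert space $\fsL^2_p$ the claim reduces to a uniform bound on $\|(A_\sigma - \lambda)^{-1}\|$ over $\{\Re \lambda \ge a\}$. Holomorphy is immediate from the absence of eigenvalues; boundedness at infinity follows from an energy estimate on the resolvent equation coupled with dissipativity, using that large $|\Im \lambda|$ causes the oscillatory transport term to dominate the bounded damping.

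The main obstacle will be the WKB construction in the second step, specifically upgrading the formal asymptotic into a genuine eigenvalue uniformly in $\omega$ and controlling the remainder so that neighbouring asymptotic branches stay well separated. A secondary difficulty is the uniform resolvent estimate along vertical strips arbitrarily close to the accumulation line $\Re \lambda = -\|\sigma\|_1/(4\pi)$, which ultimately rests on the same WKB-based spectral control to exclude hidden accumulations off the expected asymptote.
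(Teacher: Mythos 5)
Your skeleton matches the paper's: generation via dissipativity and bounded perturbation of the free transport, spectral analysis through the $2\times 2$ shooting/monodromy map, and the growth bound via Gearhart--Pr\"uss on the Hilbert space $\fsL^2_p$. The place where you genuinely deviate is in the asymptotics as $|\Im\lambda|\to\infty$: you propose a WKB ansatz on $-\partial_x^2 j+\lambda(\lambda+\sigma)j=0$ followed by a contraction argument on the monodromy matrix, whereas the paper (\cref{thm:asymptotic-m}) approximates $\sigma$ in $\fsL^1$ by piecewise-constant functions, multiplies the explicit propagators, passes to a uniform limit, and then invokes Rouch\'e's theorem. Both can in principle work; the paper's route avoids justifying the WKB remainder and delivers uniform-in-$\Im\lambda$ control in one stroke, which is precisely what you flag as your ``main obstacle.'' Your compact-resolvent argument for discreteness is correct and is a clean alternative to the paper's analyticity-of-$\det M$ argument, though by itself it does not rule out eigenvalues escaping to $|\Im\lambda|=\infty$ within the half-space, which again requires the monodromy asymptotics.

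The concrete gap is the uniform resolvent bound needed for Gearhart--Pr\"uss. You claim that ``boundedness at infinity follows from an energy estimate on the resolvent equation coupled with dissipativity,'' but taking the real part of $\langle(\lambda-A_\sigma)(\rho,j),(\rho,j)\rangle = \langle(a,b),(\rho,j)\rangle$, the transport term drops out and one is left with
\begin{equation*}
  (\Re\lambda)\,\|(\rho,j)\|^2 \;+\; \int_\T \sigma(x)\,|j(x)|^2\,\dd x
  \;=\; \Re\langle(a,b),(\rho,j)\rangle,
\end{equation*}
which for $\Re\lambda=a<0$ has the wrong sign and yields no control of $\|(\rho,j)\|$; even at $\Re\lambda=0$ it only bounds $\sigma^{1/2}j$, never $\rho$. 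Dissipativity alone confines you to $\Re\lambda>0$. The paper instead extracts the bound from the monodromy matrix: \cref{thm:resolvent-bound} gives $\|R(\lambda,A_\sigma)\|\lesssim\|M(\sigma,\lambda)^{-1}\|\exp(4\pi|\Re\lambda|+\|\sigma\|_1)$, and \cref{thm:asymptotic-m} supplies uniform invertibility of $M$ for large $|\Im\lambda|$ on the strip $\Re\lambda\in[a,1]$, with the remaining compact region handled by continuity and the absence of eigenvalues. Your contraction argument on the monodromy map therefore needs to be pushed to yield a uniform bound on $\|M(\sigma,\lambda)^{-1}\|$ along the whole vertical line $\Re\lambda=a$, not merely to localize the zeros of $\det M$. (Minor: the Proposition's display carries sign typos --- the eigenvalues cluster near $\Re\lambda=-\|\sigma\|_1/(4\pi)$ and the half-space should read $\Re\lambda\ge-\|\sigma\|_1/(4\pi)+\epsilon$; your write-up already uses the correct sign.)
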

Note that the corresponding results for the wave equation with fixed
ends have been shown in
\textcite{cox-zuazua-1994-rate-decay-damped-string}.

The main implication of our study is that, for the Goldstein-Taylor
model, the rate of convergence to equilibrium cannot be improved by
making $\sigma$ depend on $x$. We believe this suggests that the rate
of convergence to equilibrium is unlikely to be increased, in kinetic
models, by adding local oscillations to the jump rate $\sigma$. We note
that this does not exclude the possiblility that spatially dependent
jump rates cannot improve the rate of convergence to equilibrium in
the presence of more complex geometries. In fact, it is proposed in
\cite{GC11} to vary $\sigma$ on large scales in a way that is
sympathetic to the confining function $\phi$. Therefore we close the
introduction by the following open question:

\begin{question}
  Does there exist a linear kinetic equation posed on $\mathbb{R}^d$
  for some $d$ with a confining potential for which a strictly faster
  rate of convergence can be achieved by allowing the collision rate
  $\sigma$ to depend on $x$ than is achieved by constant $\sigma$?
\end{question}

\section{Semigroup and spectral problem}
\label{sec:spectral}

In this section we prove \cref{thm:semigroup-setup} in two
parts. First we show that it we have a well defined semigroup for the
flow.
\begin{proof}[Proof of \cref{thm:semigroup-setup} (first part)]
  The generator \eqref{eq:definition-generator} formally gives the
  required PDE \eqref{eq:goldstein-taylor-wave}. Without the
  \(\sigma\) the solution is given explicitly by the characteristics
  defined by the transport and this explicit representation shows that
  it generates a semigroup. As \(\sigma \in \fsL^\infty\), the
  contribution of \(\sigma\) in \(A_\sigma\) is a bounded perturbation
  so that it defines a semigroup.

  The required mass conservation follows from the estimate that
  \begin{equation*}
    \frac{\dd}{\dd t} \int_\T \rho(t,x) \, \dd x
    = - \int_T \partial_x j(t,x)\, \dd x = 0
  \end{equation*}
  and the contraction property by the estimate
  \begin{equation*}
    \frac{\dd}{\dd t} \int_\T |\rho(t,x)|^2 + |j|^2 \, \dd x
    = - \int_T
    \left(
      \conj{\rho} \partial_x j + j (\partial_x \conj{\rho} + \sigma(x)
      \conj{j})
    \right) \dd x
    = - \int_T
    \sigma(x)\, |j|^2
    \dd x \le 0.
  \end{equation*}
  This gives the first part of \cref{thm:semigroup-setup}.
\end{proof}

In the spectral property the central object is the resolvent
\begin{equation}
  \label{eq:definition-resolvent}
  R(\lambda,A_\sigma) := (\lambda - A_{\sigma})^{-1}.
\end{equation}

For \((a,b) \in \fsL^2_p\) the image
\((\rho,j) = R(\lambda,A_{\sigma})(a,b)\) is the solution to
\begin{equation}
  \label{eq:resolvent-abstract}
  (\lambda-A_{\sigma})
  \begin{pmatrix}
    \rho \\ j
  \end{pmatrix}
  =
  \begin{pmatrix}
    a \\ b
  \end{pmatrix}
\end{equation}
as long it has a unique solution. This equation can be rewritten as
\begin{equation}
  \label{eq:resolvent-dx}
  \frac{\dd}{\dd x}
  \begin{pmatrix}
    \rho \\ j
  \end{pmatrix}
  +
  \begin{pmatrix}
    0 & \sigma+\lambda \\
    \lambda & 0
  \end{pmatrix}
  \begin{pmatrix}
    \rho \\ j
  \end{pmatrix}
  =
  \begin{pmatrix}
    b \\ a
  \end{pmatrix}.
\end{equation}

Associated to \eqref{eq:resolvent-dx} we define the solution operator
for the homogeneous part and an operator \(M(\sigma,\lambda)\)
yielding the solvability condition.
\begin{defn}
  \label{def:solution-dx}
  Fix \(\sigma \in \fsL^\infty\) and \(\lambda \in \C\).  For
  \(y \in \T\) and given \((\rho_\init,j_\init)\) consider the linear
  ODE
  \begin{equation}
    \label{eq:resolvent-homogeneous-dx}
    \left\{
      \begin{aligned}
        &\frac{\dd}{\dd x}
        \begin{pmatrix}
          \rho(x) \\ j(x)
        \end{pmatrix}
        +
        \begin{pmatrix}
          0 & \sigma(x)+\lambda \\
          \lambda & 0
        \end{pmatrix}
        \begin{pmatrix}
          \rho(x) \\ j(x)
        \end{pmatrix}
        = 0\\
        &\begin{pmatrix}
          \rho(y) \\ j(y)
        \end{pmatrix}
        =
        \begin{pmatrix}
          \rho_\init \\ j_\init
        \end{pmatrix}
      \end{aligned}
    \right.
  \end{equation}
  and define \(\sol{\sigma,\lambda}{y}{x}\) as solution operator so
  that
  \(\sol{\sigma,\lambda}{y}{x}((\rho_\init,j_\init)) =
  (\rho(x),j(x))\). For the resolvent define the operator
  \begin{equation}
    \label{eq:matrix-m}
    M(\sigma,\lambda) = \id - \sol{\sigma,\lambda}{0}{2\pi}.
  \end{equation}
\end{defn}

By Duhamel's principle the resolvent equation \eqref{eq:resolvent-dx}
can be written with the solution operator as
\begin{equation}
  \label{eq:resolvent-solution}
  \begin{pmatrix}
    \rho(y) \\ j(y)
  \end{pmatrix}
  = \sol{\sigma,\lambda}{0}{y}
  \begin{pmatrix}
    \rho_0 \\ j_0
  \end{pmatrix}
  +
  \int_0^y
  \sol{\sigma,\lambda}{x}{y}
  \begin{pmatrix}
    b(x) \\ a(x)
  \end{pmatrix}
  \dd x
\end{equation}
for constants \(\rho_0,j_0\). By the periodic boundary condition
\(\rho(2\pi)=\rho_0\) and \(j(2\pi)=j_0\) we must have
\begin{equation}
  \label{eq:resolvent-determine-constant}
  M(\sigma,\lambda)
  \begin{pmatrix}
    \rho_0 \\ j_0
  \end{pmatrix}
  = \int_0^{2\pi}
  \sol{\sigma,\lambda}{x}{2\pi}
  \begin{pmatrix}
    b(x) \\ a(x)
  \end{pmatrix}
  \dd x
\end{equation}
which yields a unique solution if \(M(\sigma,\lambda)\) is
invertible. If not the kernel gives an eigenvalue solving
\begin{equation}
  \label{eq:condition-eigenvector}
  \left\{
    \begin{aligned}
      \lambda \rho + \partial_x j &= 0,\\
      \partial_x \rho + (\lambda+\sigma(x)) j &= 0.
    \end{aligned}
  \right.
\end{equation}

The characterisation by the operator \(M\) corresponds to the
well-known shooting method for solving eigenvalue problems as done in
\cite{cox-zuazua-1994-rate-decay-damped-string}. For the further
analysis we note basic properties.
\begin{lemma}
  \label{thm:continuity-m-sol}
  For a fixed \(\sigma \in \fsL^\infty\) and \(x,y \in \T\), the
  solution operator \(\sol{\sigma,\lambda}{x}{y}\) and
  \(M(\sigma,\lambda)\) are analytic with respect to
  \(\lambda \in \C\).

  Moreover, the solution operator is bounded as
  \begin{equation*}
    \| \sol{\sigma,\lambda}{x}{y} \|
    \le \exp(|\Re(\lambda)|\, |y-x| + \| \sigma \|_1/2)
  \end{equation*}
  and the solution operator \(\sol{\sigma,\lambda}{x}{y}\) and
  \(M(\sigma,\lambda)\) are continuous differentiable with respect to
  \(\|\sigma\|_1\).
\end{lemma}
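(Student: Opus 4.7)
My plan is to derive all three claims from the Peano-Baker (time-ordered) series representation of $\sol{\sigma,\lambda}{x}{y}$, supplemented by a direct energy estimate to obtain sharp constants in the norm bound. The ODE \eqref{eq:resolvent-homogeneous-dx} has coefficient matrix
\[
A_\lambda(z) = \begin{pmatrix} 0 & \sigma(z)+\lambda \\ \lambda & 0 \end{pmatrix},
\]
which is affine in both $\lambda$ and $\sigma$; this is exactly the structure that makes $|\Re\lambda|$ and $\|\sigma\|_1$ the right quantities to control everything.

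For the norm bound I would run an energy argument directly on the system, since it reproduces the stated constants cleanly. Along a solution of \eqref{eq:resolvent-homogeneous-dx}, setting $E(z) = |\rho(z)|^2 + |j(z)|^2$ and differentiating gives
\[
\tfrac{\dd}{\dd z} E(z) = -2\bigl(\sigma(z) + 2\Re\lambda\bigr)\,\Re\bigl(\conj{\rho(z)}\,j(z)\bigr),
\]
hence $|\tfrac{\dd}{\dd z} E(z)| \le (|\sigma(z)| + 2|\Re\lambda|)\,E(z)$. Gronwall then produces the claimed bound $\|\sol{\sigma,\lambda}{x}{y}\| \le \exp(|\Re\lambda|\,|y-x| + \|\sigma\|_1/2)$, and by definition the same kind of bound transfers to $M(\sigma,\lambda) = \id - \sol{\sigma,\lambda}{0}{2\pi}$.

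For analyticity in $\lambda$ I would write out the Peano-Baker series
\[
\sol{\sigma,\lambda}{x}{y} = \id + \sum_{n\ge 1}(-1)^n\int_{x\le t_1\le\cdots\le t_n\le y} A_\lambda(t_n)\cdots A_\lambda(t_1)\,\dd t_n\cdots\dd t_1.
\]
Since $A_\lambda(\cdot)$ is affine in $\lambda$ with $\fsL^1$ coefficients, each term is a polynomial of degree at most $n$ in $\lambda$, and the series converges absolutely and uniformly on compact subsets of $\C$ by the bound of the previous step. A uniform limit of polynomials on compact sets is analytic, so $\sol{\sigma,\lambda}{x}{y}$ and hence $M(\sigma,\lambda)$ are analytic in $\lambda$.

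For the dependence on $\sigma$ in the $\fsL^1$ topology I would use the Duhamel identity
\[
\sol{\tilde\sigma,\lambda}{x}{y} - \sol{\sigma,\lambda}{x}{y}
= -\int_x^y \sol{\sigma,\lambda}{z}{y}\bigl(A_{\lambda}^{\tilde\sigma}-A_{\lambda}^{\sigma}\bigr)(z)\,\sol{\tilde\sigma,\lambda}{x}{z}\,\dd z,
\]
where the integrand differs from the $\sigma$-case only through the off-diagonal entry $\tilde\sigma-\sigma$. Combined with the uniform bound from the energy estimate, this gives Lipschitz dependence in $\|\tilde\sigma-\sigma\|_1$, and taking the limit of the corresponding difference quotient along $\tilde\sigma = \sigma + s\eta$ produces the Fréchet derivative in the direction $\eta$, hence continuous differentiability. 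The only real care needed in the whole argument is keeping the ordering of factors straight in the time-ordered integrals and in Duhamel; beyond that everything is standard ODE perturbation theory, so I do not expect a serious obstacle.
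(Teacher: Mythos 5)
Your proof is correct and takes essentially the same route as the paper: the paper also proves the norm bound and the $\fsL^1$-dependence on $\sigma$ from the same energy estimate
$\tfrac{\dd}{\dd x}\tfrac12(|\rho|^2+|j|^2) \le \bigl(2|\Re\lambda|+\sigma(x)\bigr)\tfrac12(|\rho|^2+|j|^2)$,
and invokes ``standard ODE theory'' for analyticity in $\lambda$. Your Peano--Baker series and Duhamel identity are simply the explicit form of what the paper calls standard ODE theory, so the two arguments coincide in substance.
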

\begin{proof}
  The existence and analyticity  of the solution operator
  \(\sol{\sigma,\lambda}{x}{y}\) and \(M(\sigma,\lambda)\) follow from
  standard ODE theory.

  For the growth bound and the dependence with respect to
  \(\|\sigma\|_1\) note the following a priori estimate
  \begin{equation*}
    \begin{split}
      \frac{\dd}{\dd x} \frac 12 \big(|\rho|^2 + |j|^2)
      &= \conj{\rho} \partial_x \rho + j \partial_x \conj{j} \\
      &= - \conj{\rho} (\sigma+\lambda) j - j \conj{\lambda}
      \conj{\rho} \\
      &= - \conj{\rho} j (\sigma+\lambda+\conj{\lambda}) \\
      &\le \Big[2 |\Re(\lambda)| + \sigma(x) \Big]
      \frac 12 \big(|\rho|^2 + |j|^2),
    \end{split}
  \end{equation*}
  which yields the result.
\end{proof}

Hence we can precisely describe the resolvent.
\begin{lemma}
  \label{thm:resolvent-bound}
  The resolvent set consist of \(\lambda \in \C\) for which the matrix
  \(M(\sigma,\lambda)\) is invertible and the resolvent is bounded in
  the operator norm as
  \begin{equation*}
    \| R(\lambda,A) \|
    \lesssim
    \| M(\sigma,\lambda)^{-1} \|\;
    \exp\Big(4\pi|\Re(\lambda)| + \| \sigma \|_1\Big).
  \end{equation*}
  Moreover, for every \(\lambda\) in the spectrum of \(A_{\sigma}\)
  there exists at least one eigenvector.
\end{lemma}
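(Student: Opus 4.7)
The plan is to read both claims directly off the shooting-method representation \eqref{eq:resolvent-solution}--\eqref{eq:resolvent-determine-constant} set up above, with the quantitative bound obtained by chasing constants through \cref{thm:continuity-m-sol}.

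For the characterisation of the resolvent set and the existence of eigenvectors, I would first note that $M(\sigma,\lambda)$ is a linear map on the finite-dimensional space $\C^2$, so it is either invertible or has a non-trivial kernel. In the first case the preceding discussion already shows that \eqref{eq:resolvent-abstract} admits a unique solution, obtained by solving \eqref{eq:resolvent-determine-constant} for $(\rho_0,j_0)$ and reconstructing $(\rho,j)$ via \eqref{eq:resolvent-solution}; hence $\lambda$ lies in the resolvent set. In the second case any kernel element $(\rho_0,j_0) \ne 0$ used as initial data for \eqref{eq:resolvent-homogeneous-dx} produces a $2\pi$-periodic solution of \eqref{eq:condition-eigenvector}, i.e.\ an eigenvector. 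The only point requiring a small remark is the zero-mean constraint defining $\fsL^2_p$: the first line of \eqref{eq:condition-eigenvector} forces $\int_\T \rho\, \dd x = 0$ whenever $\lambda \ne 0$, while the value $\lambda = 0$ can be excluded from the point spectrum directly from the contraction property already established in the first part of \cref{thm:semigroup-setup}.

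For the resolvent norm estimate the plan is to use \cref{thm:continuity-m-sol}, which supplies the uniform bound $\|\sol{\sigma,\lambda}{x}{y}\| \le \exp(2\pi|\Re\lambda| + \|\sigma\|_1/2)$ for $x,y \in [0,2\pi]$. Applied to the right-hand side of \eqref{eq:resolvent-determine-constant}, together with a Cauchy--Schwarz estimate in $x$, this controls $(\rho_0,j_0)$ by $\|M(\sigma,\lambda)^{-1}\| \cdot \exp(2\pi|\Re\lambda| + \|\sigma\|_1/2) \cdot \|(a,b)\|_2$. Substituting back into \eqref{eq:resolvent-solution} and bounding both the homogeneous evolution and the Duhamel integral by the same exponential contributes one further factor of the same size, and multiplying the two yields the claimed constant $\exp(4\pi|\Re\lambda| + \|\sigma\|_1)$.

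No genuinely hard step is expected here; the only delicate point is the Cauchy--Schwarz bookkeeping, since the output is the $\fsL^2$-norm of $(\rho(y),j(y))$ in $y$ while the source $(b(x),a(x))$ is integrated in $x$. The uniform-in-$(x,y)$ operator-norm bound from \cref{thm:continuity-m-sol} is precisely what is needed to separate the two variables cleanly, after which the estimate is routine.
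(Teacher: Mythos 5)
Your proposal follows the same route as the paper, which compresses the argument into two sentences: read the resolvent characterisation and the eigenvector statement off the shooting-method representation \eqref{eq:resolvent-solution}--\eqref{eq:resolvent-determine-constant}, and obtain the norm bound by plugging in the estimate on $\sol{\sigma,\lambda}{x}{y}$ from \cref{thm:continuity-m-sol}. Your constant chasing is correct: each of the two passes through the solution operator contributes a factor $\exp(2\pi|\Re\lambda|+\|\sigma\|_1/2)$, and the Duhamel term (which lacks the $\|M^{-1}\|$ factor) is absorbed because $\|M^{-1}\|\ge \|M\|^{-1}\gtrsim\exp(-2\pi|\Re\lambda|-\|\sigma\|_1/2)$, so the combined bound is of the claimed form.

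One remark on the point you flag. You are right to be suspicious about the mean-zero constraint, and in fact the edge case at $\lambda=0$ is slightly worse than your sketch suggests: a direct computation with $\lambda=0$ gives $\sol{\sigma,0}{0}{2\pi}=\left(\begin{smallmatrix}1 & -\|\sigma\|_1\\0 & 1\end{smallmatrix}\right)$, so $M(\sigma,0)$ is \emph{not} invertible, yet the kernel vector $(\rho_0,0)$ corresponds to the stationary state $\rho\equiv\const$, $j\equiv 0$, which is excluded from $\fsL^2_p$; meanwhile $A_\sigma$ \emph{is} invertible on $\fsL^2_p$ (one checks directly that $A_\sigma(\rho,j)=(a,b)$ has a unique mean-zero solution). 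So the biconditional ``$\lambda$ in the resolvent set iff $M(\sigma,\lambda)$ invertible'' strictly fails at $\lambda=0$, and the estimate is vacuous there since $M(\sigma,0)^{-1}$ does not exist. The paper's own one-line proof glosses over this just as much as your sketch does; it is harmless downstream because the lemma is only ever applied in the half-plane $\Re\lambda>-\|\sigma\|_1/(4\pi)$ where the relevant eigenvalues live and $\lambda=0$ is handled by the contraction estimate, but if you want a clean statement you should either exclude $\lambda=0$ or restrict to the invariant subspace from the start. Your argument for excluding $\lambda=0$ from the point spectrum ``from the contraction property'' works but needs one more line: the energy identity gives $\int_\T\sigma|j|^2=0$ for a $\lambda=0$ eigenvector, which together with $\partial_x j=0$, $\partial_x\rho=-\sigma j$, $\sigma\not\equiv 0$ and the mean-zero constraint forces $(\rho,j)=0$.
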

\begin{proof}
  The result follows directly from the representation
  \eqref{eq:resolvent-solution} once we determine the constants
  \(\rho_0,j_0\) by \eqref{eq:resolvent-determine-constant} and use
  the bound from \cref{thm:continuity-m-sol}.

  If \(\lambda\) is in the spectrum of \(A_{\sigma}\), then there
  exists an element in the kernel of \(M(\sigma,\lambda)\) which
  yields an eigenvector.
\end{proof}

We now look at the asymptotic form of \(M\) as
\(|\Im \lambda|\to\infty\) over a finite range of \(\Re
\lambda\).
\begin{lemma}
  \label{thm:asymptotic-m}
  Fix a bounded interval \(I \in \R\) and
  \(\sigma \in \fsL^\infty(\T)\), then
  \begin{equation*}
    M(\sigma,\lambda)
    \to
  \begin{pmatrix}
    1-\cosh\left(\lambda\left(2\pi + \frac{\| \sigma \|_1}{2\lambda}\right)\right)
    &\sinh\left(\lambda\left(2\pi + \frac{\| \sigma \|_1}{2\lambda}\right)\right) \\
    \sinh\left(\lambda\left(2\pi + \frac{\| \sigma \|_1}{2\lambda}\right)\right)
    &1-\cosh\left(\lambda\left(2\pi + \frac{\| \sigma \|_1}{2\lambda}\right)\right)
  \end{pmatrix}
  \end{equation*}
  uniformly over \(\Re \lambda \in I\) as \(|\Im \lambda| \to \infty\).
\end{lemma}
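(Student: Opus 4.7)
The plan is to diagonalise the leading part of the ODE by passing to the characteristic variables $p = (\rho+j)/\sqrt{2}$ and $q = (\rho-j)/\sqrt{2}$, which coincide with the densities at velocities $\pm 1$ in the original formulation \eqref{eq:goldstein-taylor}. A direct computation rewrites the homogeneous system \eqref{eq:resolvent-homogeneous-dx} as
\[
p' = -\bigl(\lambda + \tfrac{\sigma}{2}\bigr) p + \tfrac{\sigma}{2}\, q,
\qquad
q' = -\tfrac{\sigma}{2}\, p + \bigl(\lambda + \tfrac{\sigma}{2}\bigr) q.
\]
The diagonal part $\mathrm{diag}(-\lambda, \lambda)$ dominates for large $|\Im\lambda|$, while the off-diagonal coupling has size $\sigma/2$. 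Setting $\Sigma(x) := \int_0^x \sigma(y)\, \dd y$ and writing $p = \ee^{-(\lambda x + \Sigma(x)/2)} P$, $q = \ee^{+(\lambda x + \Sigma(x)/2)} Q$ strips off the leading exponential growth and reduces the system to the oscillatory Volterra pair
\[
P'(x) = \tfrac{1}{2}\sigma(x)\, \ee^{2\lambda x + \Sigma(x)}\, Q(x),
\qquad
Q'(x) = -\tfrac{1}{2}\sigma(x)\, \ee^{-2\lambda x - \Sigma(x)}\, P(x),
\]
with initial data $P(0) = p(0)$ and $Q(0) = q(0)$.

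The heart of the argument is to show that $(P(2\pi), Q(2\pi)) \to (P(0), Q(0))$ as $|\Im\lambda| \to \infty$, uniformly for $\Re\lambda \in I$. The a priori bound from \cref{thm:continuity-m-sol} keeps $P$ and $Q$ bounded on $[0, 2\pi]$ uniformly throughout the strip. Iterating the Volterra system once gives
\begin{align*}
P(x) - P(0)
&= Q(0) \int_0^x \tfrac{1}{2}\sigma(y)\, \ee^{2\lambda y + \Sigma(y)}\, \dd y \\
&\quad{}+ \int_0^x \tfrac{1}{2}\sigma(y)\, \ee^{2\lambda y + \Sigma(y)}\, (Q(y) - Q(0))\, \dd y,
\end{align*}
and symmetrically for $Q(x) - Q(0)$. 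Each integral has the form $\int_0^x g(y)\, \ee^{2\ii\, \Im(\lambda)\, y}\, \dd y$, where $g$ lies in $\fsL^1[0, 2\pi]$ and depends on $\sigma$ and $\Re\lambda$ but not on $\Im\lambda$; such integrals tend to zero by Riemann-Lebesgue. Uniformity across $\Re\lambda \in I$ with $\sigma$ only in $\fsL^\infty$ follows by a standard density argument: for smooth $\sigma$, a single integration by parts yields an $O(|\Im\lambda|^{-1})$ bound with constants controlled uniformly on $I$, and the general case is recovered by approximating $\sigma$ in $\fsL^1$ and using the $\|\sigma\|_1$-continuity provided by \cref{thm:continuity-m-sol}. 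Closing a Gronwall inequality on $|P-P(0)| + |Q-Q(0)|$ then gives the required convergence.

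It remains to undo the changes of variable. In the limit, the solution operator sends $(p(0), q(0))$ to $(\ee^{-L} p(0),\, \ee^{L} q(0))$ with $L := 2\pi\lambda + \|\sigma\|_1/2 = \lambda\bigl(2\pi + \|\sigma\|_1/(2\lambda)\bigr)$. Conjugating by the involutive orthogonal basis change $T = \tfrac{1}{\sqrt{2}}\left(\begin{smallmatrix}1 & 1\\ 1 & -1\end{smallmatrix}\right)$ between $(p, q)$ and $(\rho, j)$ turns $\mathrm{diag}(\ee^{-L}, \ee^{L})$ into $\left(\begin{smallmatrix}\cosh L & -\sinh L\\ -\sinh L & \cosh L\end{smallmatrix}\right)$, and subtracting from $\id$ produces exactly the stated limit of $M(\sigma, \lambda)$. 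The main technical obstacle is the uniformity of the Riemann-Lebesgue decay across the strip $\Re\lambda \in I$ for $\sigma$ merely in $\fsL^\infty$; the density argument outlined above is tailored to resolve this point.
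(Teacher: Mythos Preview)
Your argument is correct and proceeds along a genuinely different line from the paper's own proof. The paper approximates $\sigma$ by a piecewise constant function $\tilde\sigma$, computes the solution operator $\sol{\tilde\sigma,\lambda}{x_{j-1}}{x_j}$ explicitly on each constant block via the exact exponential formula, shows that each block converges to the simple $\cosh/\sinh$ form as $|\Im\lambda|\to\infty$, and then composes the blocks using the group property. You instead diagonalise the system in the characteristic variables $p,q$, strip off the leading scalar exponential, and reduce the question to the decay of oscillatory integrals of the type $\int_0^x \sigma(y)\,\ee^{\pm(2\lambda y+\Sigma(y))}\,\dd y$, handled by Riemann--Lebesgue together with a Gronwall closure for the coupled remainder $(Q(y)-Q(0))$. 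Both proofs ultimately rest on an $\fsL^1$-approximation of $\sigma$ (by step functions in the paper, by smooth functions in your version) combined with the $\|\sigma\|_1$-stability from \cref{thm:continuity-m-sol}; in this sense they share the same skeleton. Your route makes the cancellation mechanism more transparent and avoids the explicit $2\times2$ block algebra, while the paper's route is more hands-on and makes the origin of the limiting $\cosh/\sinh$ matrix immediately visible. One point worth tightening in your write-up: the sentence claiming that ``each integral has the form $\int_0^x g(y)\,\ee^{2\ii\,\Im(\lambda)\,y}\,\dd y$ with $g$ independent of $\Im\lambda$'' applies only to the first (leading) term of your split, since $Q(y)-Q(0)$ does depend on $\Im\lambda$; the Gronwall step you invoke afterwards is precisely what absorbs this, but the phrasing momentarily overstates what Riemann--Lebesgue alone delivers.
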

This matches
\cite[Theorem~5.1]{cox-1998-designing-optimal-enegery-absorption}
where the result has been proven with explicit error bounds by a
series solution. For being self-contained, we give another shorter
proof.
\begin{proof}
  The idea is like in the proof of the Riemann-Lebesgue lemma that we
  can approximate \(\sigma\) by a piecewise constant function
  \(\tilde{\sigma}\), i.e.\ there exists \(x_0=0 < x_1 < x_2 < \dots <
  x_K=2\pi\) and \(\sigma_1,\sigma_2,\dots,\sigma_K\) such that
  \begin{equation*}
    \tilde{\sigma}(y) = \sigma_j \qquad \forall y \in [x_{j-1},x_j).
  \end{equation*}

  On each constant part we find the explicit solution
  \begin{equation*}
    \sol{\tilde{\sigma},\lambda}{x_{j-1}}{x_j} =
    \begin{pmatrix}
      \cosh\Big((x_j-x_{j-1}) \sqrt{\lambda(\lambda+\sigma_j)}\Big)
      & - \frac{\lambda+\sigma_j}{\lambda}
      \sinh\Big((x_j-x_{j-1}) \sqrt{\lambda(\lambda+\sigma_j)}\Big) \\
      - \frac{\lambda}{\lambda+\sigma_j}
      \sinh\Big((x_j-x_{j-1}) \sqrt{\lambda(\lambda+\sigma_j)}\Big)
      & \cosh\Big((x_j-x_{j-1}) \sqrt{\lambda(\lambda+\sigma_j)}\Big)
    \end{pmatrix}
  \end{equation*}
  and for \(|\Im \lambda| \to \infty\) this is converging uniformly to
  \begin{equation*}
    \tilde{S}_j :=
    \begin{pmatrix}
      \cosh\left(\lambda \left((x_j-x_{j-1}) + \frac{(x_j-x_{j-1})\sigma_j}{2\lambda}\right)\right)
      & - \sinh\left(\lambda \left((x_j-x_{j-1}) +
          \frac{(x_j-x_{j-1})\sigma_j}{2\lambda}\right)\right) \\
      - \sinh\left(\lambda \left((x_j-x_{j-1}) +
          \frac{(x_j-x_{j-1})\sigma_j}{2\lambda}\right)\right)
      & \cosh\left(\lambda \left((x_j-x_{j-1}) + \frac{(x_j-x_{j-1})\sigma_j}{2\lambda}\right)\right)
    \end{pmatrix}.
  \end{equation*}
  By the group property of the solution operator we find
  \begin{equation*}
    \sol{\tilde{\sigma},\lambda}{0}{2\pi}
    = \sol{\tilde{\sigma},\lambda}{x_{j-1}}{2\pi}
    \circ
    \sol{\tilde{\sigma},\lambda}{x_{j-2}}{x_{j-1}}
    \circ \dots \circ
    \sol{\tilde{\sigma},\lambda}{x_{0}}{x_{1}}
  \end{equation*}
  which therefore converges uniformly to
  \begin{equation*}
    \tilde{S}_j \circ \tilde{S}_{j-1} \circ \dots \circ \tilde{S}_{1}
    =
    \begin{pmatrix}
      \cosh\left(\lambda\left(2\pi
          + \frac{\| \tilde{\sigma} \|_1}{2\lambda}\right)\right)
      & -\sinh\left(\lambda\left(2\pi
          + \frac{\| \tilde{\sigma} \|_1}{2\lambda}\right)\right) \\
      -\sinh\left(\lambda\left(2\pi
          + \frac{\| \tilde{\sigma} \|_1}{2\lambda}\right)\right)
      & \cosh\left(\lambda\left(2\pi
          + \frac{\| \tilde{\sigma} \|_1}{2\lambda}\right)\right)
    \end{pmatrix}.
  \end{equation*}
  As we can approximate any function \(\sigma \in \fsL^\infty\)
  arbitrary well by the piecewise function in \(\fsL^1\) the result
  follows from the stability of \cref{thm:continuity-m-sol}.
\end{proof}

We can now prove the remaining parts of \cref{thm:semigroup-setup}.
\begin{proof}[Proof of \cref{thm:semigroup-setup} (remaining part)]
  The asymptotic expression of \(M\) in \cref{thm:asymptotic-m} is
  invertible except when
  \(\lambda = -\| \sigma \|_1/(4\pi) + 2\pi \ii n\) for \(n \in
  \Z\). By considering the uniform convergence over
  \(\Re \lambda \in [-\| \sigma \|_1/(4\pi)-\epsilon,-\| \sigma
  \|_1/(4\pi)+\epsilon]\) then yields the existence of root of
  limiting expression for \(\det M\) in the strip for \(|\Im \lambda|\)
  large enough. As \(\det M\) is analytic, Rouge's theorem ensures
  then the existence of a root for \(\det M\), i.e.\ an eigenvalue
  for the generator.

  As \(A_{\sigma}\) generates a contracting semigroup so that there
  are no eigenvalues \(\lambda\) with \(\Re \lambda > 0\). By the
  asymptotic expression there are no eigenvalues in
  \(-\| \sigma \|_1/(4\pi) + \epsilon\) for large enough
  \(|\Im \lambda|\). As \(M\) is analytic there can be only finitely
  many eigenvalues in the remaining bounded region.

  For the last part assume \(a > -\| \sigma \|_1/(4\pi)\) such that
  there is no eigenvalue \(\lambda\) with \(\Re \lambda \ge a\).  For
  \(\Re \lambda > 1\) we can use the fact that \((\ee^{tA_\sigma})_t\)
  is a contraction semigroup to find a uniform bound on the resolvent
  \(R(\lambda,A_\sigma)\).  By the asymptotic expression of
  \cref{thm:asymptotic-m} we have a uniform bound of
  \(\| M(\sigma,\lambda)^{-1} \|\) for large enough \(|\Im \lambda|\)
  and \(\Re \lambda \in [a,1]\).  As there are no eigenvalue with real
  part equal to \(a\) and \(M\) is continuous, this shows
  \begin{equation*}
    \sup_{\Re \lambda \in [-a,1]} \| M(\sigma,a+\ii b)^{-1} \| < \infty.
  \end{equation*}
  By \cref{thm:resolvent-bound} this shows the same bound for the
  resolvent. As our function space \(\fsL^2_p\) is a Hilbert space, we
  can thus apply Gearhart-Prüss-Greiner theorem \cite[Thm 1.11 in
  Chapter V]{engel-nagel-2000-one-parameter-semigroups} to find the
  claimed growth bound.
\end{proof}

\begin{remark}
  An alternative for using the Gearhart-Prüss-Greiner theorem is an
  adaptation of the theory of positive semigroups as done in
  \textcite{bernard-salvarani-2013-degenerate-linear-boltzmann}.
  \Textcite{cox-zuazua-1994-rate-decay-damped-string} establish the
  growth bound by studying the eigenvector system in more detail.
\end{remark}

We close this section noting the eigenvalues for the case that
\(\sigma\) is constant.
\begin{lemma}
  \label{thm:spectrum-constant-sigma}
  Suppose that \(\sigma\) is constant. Then the spectrum in
  \(\fsL^2_p\) consists of
  \begin{equation*}
    -\frac{\| \sigma \|_1}{4\pi} \pm
    \sqrt{\frac{\|\sigma\|_1^2}{(4\pi)^2} - n^2}
    \qquad \text{for } n=1,2,3,\dots
  \end{equation*}
  and
  \begin{equation*}
    - \frac{\| \sigma \|_1}{2\pi}.
  \end{equation*}
\end{lemma}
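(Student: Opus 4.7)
The plan is to exploit the translation invariance: when $\sigma$ is constant, the generator $A_\sigma$ commutes with $x$-translations, so the eigenvalue problem \eqref{eq:condition-eigenvector} decouples along Fourier modes. I would write $\rho(x) = \sum_{n\in\Z} \hat\rho_n \ee^{\ii n x}$ and $j(x) = \sum_{n\in\Z} \hat j_n \ee^{\ii n x}$; the constraint $(\rho,j) \in \fsL^2_p$ kills $\hat\rho_0$.

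Substituting into \eqref{eq:condition-eigenvector} yields, for each $n \in \Z$, the $2\times 2$ system
\begin{equation*}
  \begin{pmatrix} \lambda & \ii n \\ \ii n & \lambda+\sigma \end{pmatrix}
  \begin{pmatrix} \hat\rho_n \\ \hat j_n \end{pmatrix}
  = \begin{pmatrix} 0 \\ 0 \end{pmatrix}.
\end{equation*}
For $n \neq 0$ both components are free and a nonzero mode exists iff the determinant vanishes, i.e.\ $\lambda^2 + \sigma \lambda + n^2 = 0$, giving $\lambda = -\sigma/2 \pm \sqrt{\sigma^2/4 - n^2}$. Since $\|\sigma\|_1 = 2\pi \sigma$, this is exactly the first list. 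For $n=0$, the constraint $\hat\rho_0 = 0$ reduces the system to $(\lambda+\sigma)\hat j_0 = 0$, producing the single eigenvalue $\lambda = -\sigma = -\|\sigma\|_1/(2\pi)$ with eigenvector $(0,\text{const})$.

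To confirm these are \emph{all} spectral values, I would invoke \cref{thm:resolvent-bound}, which reduces the spectrum to the zeros of $\det M(\sigma,\lambda)$, and note that the Fourier computation above already exhibits a full set of eigenvectors for each listed $\lambda$; alternatively, for constant $\sigma$ the operator $M(\sigma,\lambda)$ can be computed explicitly from the constant-coefficient ODE \eqref{eq:resolvent-homogeneous-dx} (analogously to the piecewise-constant formula appearing in the proof of \cref{thm:asymptotic-m}) and its determinant factors, up to a harmless prefactor, as $\prod_n (\lambda^2+\sigma\lambda+n^2)$, matching the list.

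The main obstacle is essentially bookkeeping rather than substance: making sure the zero-mean constraint is treated correctly (so that the $n=0$ contribution reduces to $\lambda=-\sigma$ rather than spuriously adding $\lambda = 0$), and verifying that the Fourier-mode eigenvectors are complete in $\fsL^2_p$ so that no spectrum is missed; this completeness follows since the $\ee^{\ii n x}$ form a basis of $\fsL^2(\T)$ and each Fourier block is a finite-dimensional linear problem.
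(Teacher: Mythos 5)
Your proposal is correct and matches the paper's approach: the paper's proof states exactly that "the spatial Fourier modes decouple and the result follows directly by solving the eigenvalue problem for each mode, where we exclude the stationary state," and even mentions the same alternative via the explicit $M(\sigma,\lambda)$ for constant $\sigma$. You have simply written out the $2\times 2$ Fourier-block computation, the $n=0$ reduction under the zero-mean constraint, and the translation $\sigma = \|\sigma\|_1/(2\pi)$ that the paper leaves implicit.
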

\begin{proof}
  In this setting the spatial Fourier modes decouple and the result
  follows directly by solving the eigenvalue problem for each mode,
  where we exclude the stationary state as it is done in \(\fsL^2_p\).
  Alternatively, we can use the explicit expression of \(M\) for a
  constant \(\sigma\) as in the proof of \cref{thm:asymptotic-m}.
\end{proof}

For a detailed study of the decay behaviour in the case of constant
\(\sigma\) we refer to
\textcite{achleitner-arnold-signorello-2019-optimal-decay-estimates-model-decomposition}.

\section{Perturbation analysis}
\label{sec:perturbation}

Given some \(\sigma_0 \in \fsL^\infty\) and a perturbation
\(\eta \in \fsL^\infty\) we can compute how the eigenvalues of
\(A_{\sigma_0+\epsilon\eta}\) are changing for varying
\(\epsilon\). The eigenvalues \((\rho_{\epsilon},j_{\epsilon})\)
satisfy the equation
\begin{equation*}
  (\lambda_{\epsilon} - A_{\sigma_0+\epsilon\eta})
  \begin{pmatrix}
    \rho_{\epsilon} \\ j_{\epsilon}
  \end{pmatrix}
  = 0
\end{equation*}
and formally taking the derivative with respect to \(\epsilon\) yields
\begin{equation*}
  (\lambda_{\epsilon}' - A_{\sigma_0+\epsilon\eta}')
  \begin{pmatrix}
    \rho_{\epsilon} \\ j_{\epsilon}
  \end{pmatrix}
  +
  (\lambda_{\epsilon} - A_{\sigma_0+\epsilon\eta})
  \begin{pmatrix}
    \rho_{\epsilon}' \\ j_{\epsilon}'
  \end{pmatrix}
  = 0.
\end{equation*}
Testing with a suitable adjoint \((\rho_{\epsilon}^*,j_{\epsilon}^*)\)
yields
\begin{equation*}
  \lambda_{\epsilon}'\;
  \sip{
    \begin{pmatrix}
      \rho_{\epsilon}^* \\ j_{\epsilon}^*
    \end{pmatrix}
  }
  {
    \begin{pmatrix}
      \rho_{\epsilon} \\ j_{\epsilon}
    \end{pmatrix}
  }
  =
  \sip{
    \begin{pmatrix}
      \rho_{\epsilon}^* \\ j_{\epsilon}^*
    \end{pmatrix}
  }
  {
    A_{\sigma+\epsilon\eta}'
    \begin{pmatrix}
      \rho_{\epsilon} \\ j_{\epsilon}
    \end{pmatrix}
  }.
\end{equation*}
Hence assuming that both inner products are non-zero we find that
formally \(\lambda_{\epsilon}\) is differentiable with a non-zero
derivative and thus by varying \(\epsilon\) we can change the
eigenvalue and improve the spectral gap if the spectral gap is
determined by a single eigenvalue.

The key point for this perturbation analysis is to understand when the
weight in front of \(\lambda'_{\epsilon}\) can be vanishing. By using
the formulation with respect to the operator \(M(\sigma,\lambda)\)
from \cref{def:solution-dx} we can show that for a simple eigenvalue
this prefactor never vanishes.

As a first step we sharpen the condition for the spectrum from
\cref{thm:resolvent-bound}.
\begin{lemma}
  \label{thm:characterisation-simple-eigenvector}
 Suppose $\lambda_0$ is seperated from the spectrum, then $\lambda_0$ is determined by a simple eigenvalue if and only if $\det(M, \sigma, \lambda)$ has a simple root.
\end{lemma}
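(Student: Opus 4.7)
The plan is to identify the algebraic multiplicity of $\lambda_0$ as an eigenvalue of $A_\sigma$ with the order of vanishing of $d(\lambda) := \det M(\sigma,\lambda)$ at $\lambda_0$, using the factorisation of the resolvent through the $2 \times 2$ matrix $M(\sigma,\lambda)^{-1}$ that is already implicit in \eqref{eq:resolvent-solution}--\eqref{eq:resolvent-determine-constant}. First I would package the Duhamel formulas as $R(\lambda,A_\sigma) = \Phi(\lambda)\, M(\sigma,\lambda)^{-1}\, \Psi(\lambda) + T(\lambda)$, where $\Phi(\lambda)\colon \C^2 \to \fsL^2_p$ is the map $v \mapsto \sol{\sigma,\lambda}{0}{\cdot} v$, $\Psi(\lambda)\colon \fsL^2_p \to \C^2$ is $(a,b) \mapsto \int_0^{2\pi}\sol{\sigma,\lambda}{x}{2\pi}(b(x),a(x))^\top \dd x$, and $T(\lambda)$ the particular solution operator; by \cref{thm:continuity-m-sol} all three are entire in $\lambda$. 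Since $\lambda_0$ is isolated in the spectrum, the Riesz projection $P = \frac{1}{2\pi \ii}\oint_\gamma R(\lambda,A_\sigma)\,\dd \lambda$ is well defined, the contribution of $T$ drops out, and the algebraic multiplicity equals $\dim \operatorname{Range}(P)$.

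For the direction ``simple zero of $d$ implies simple eigenvalue'': if $d$ has a simple zero at $\lambda_0$, then $M(\sigma,\lambda_0)$ must have rank exactly one, because $M(\sigma,\lambda_0)=0$ would force $d$ to vanish to order at least two. Using the adjugate identity $M^{-1} = \operatorname{adj}(M)/d$, the pole of $M^{-1}$ at $\lambda_0$ is simple with residue $\operatorname{adj}(M(\sigma,\lambda_0))/d'(\lambda_0)$, which is a rank-one matrix. Hence $P = \Phi(\lambda_0)\cdot \frac{\operatorname{adj}(M(\sigma,\lambda_0))}{d'(\lambda_0)} \cdot \Psi(\lambda_0)$ factors through a one-dimensional space, so $\dim \operatorname{Range}(P) \le 1$; combining with $P \neq 0$ (guaranteed by \cref{thm:resolvent-bound} together with the fact that $\lambda_0$ is an isolated spectral point), we get $\dim \operatorname{Range}(P) = 1$.

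For the converse, I would argue by contrapositive and split on the rank of $M(\sigma,\lambda_0)$. If $M(\sigma,\lambda_0) = 0$, then $\ker M(\sigma,\lambda_0) = \C^2$ is two-dimensional; since $\Phi(\lambda_0)$ is injective (an ODE solution operator is invertible), we obtain two linearly independent eigenvectors of $A_\sigma$, so the algebraic multiplicity is at least two. If instead $M(\sigma,\lambda_0)$ has rank one but $d$ vanishes to order $k \ge 2$, let $v_0$ span $\ker M(\sigma,\lambda_0)$ and set $w_0 = \Phi(\lambda_0)v_0$, which is an eigenvector. To produce a Jordan chain I need a periodic solution to the inhomogeneous ODE encoding $(A_\sigma - \lambda_0) w_1 = w_0$; this is solvable exactly when the induced right-hand side in \eqref{eq:resolvent-determine-constant} lies in $\operatorname{Range}(M(\sigma,\lambda_0))$, which by Jacobi's formula $d'(\lambda_0) = \operatorname{tr}(\operatorname{adj}(M(\sigma,\lambda_0))\, M'(\sigma,\lambda_0))$ together with $\operatorname{Range}(\operatorname{adj}(M(\sigma,\lambda_0))) = \ker M(\sigma,\lambda_0)$ is precisely equivalent to $d'(\lambda_0) = 0$. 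So the assumption $k \ge 2$ guarantees a generalised eigenvector, and again $\dim \operatorname{Range}(P) \ge 2$.

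The main obstacle is the rank-one subcase of the converse direction: identifying the abstract Fredholm solvability condition for the generalised-eigenvector equation with the vanishing of $d'(\lambda_0)$ requires a careful check that the integral $\int_0^{2\pi} \sol{\sigma,\lambda_0}{x}{2\pi}$ applied to the right-hand side is, up to the analytic factor $\Phi(\lambda_0)$, exactly $M'(\sigma,\lambda_0) v_0$ — this is the point where the paper's Duhamel representation meets the Jacobi formula. A more abstract alternative would be to invoke the Gohberg--Sigal theory of analytic operator functions, which directly yields $\operatorname{ord}_{\lambda_0} d = \dim \operatorname{Range}(P)$, but I would prefer the explicit $2\times 2$ computation for transparency and to reuse the resolvent formulas already developed in \cref{sec:spectral}.
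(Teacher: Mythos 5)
Your proof follows the same overall route as the paper: factor the resolvent through the $2\times 2$ matrix $M(\sigma,\lambda)^{-1}$, write $M^{-1}=\operatorname{adj}(M)/\det M$, and analyse the Riesz projection $\Pi_{\lambda_0}=\frac{1}{2\pi\ii}\oint_\gamma R(\lambda,A_\sigma)\,\dd\lambda$ via the Laurent expansion of $M^{-1}$. The difference is one of completeness rather than of method. The paper's proof is a short sketch that ends with the bare assertion ``This gives a one-dimensional image if and only if $\det(M)$ has a simple root,'' and in particular it does not justify the harder converse direction (higher-order zero of $\det M$ implies non-simple eigenvalue). You supply that justification explicitly by splitting on $\operatorname{rank} M(\sigma,\lambda_0)$: when $M(\sigma,\lambda_0)=0$ you exhibit two independent eigenvectors directly, and when $M(\sigma,\lambda_0)$ has rank one you build a Jordan chain, identifying the Fredholm solvability condition for the generalised eigenvector with $d'(\lambda_0)=0$ through Jacobi's formula and $\operatorname{Range}(\operatorname{adj} M(\sigma,\lambda_0))=\ker M(\sigma,\lambda_0)$. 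The ``main obstacle'' you flag is in fact a routine variation-of-parameters computation: writing the coefficient matrix in \eqref{eq:resolvent-homogeneous-dx} as $B(\lambda,x)$, one has $\partial_\lambda B=\left(\begin{smallmatrix}0&1\\1&0\end{smallmatrix}\right)$, so
\begin{equation*}
  M'(\sigma,\lambda_0)\,v_0
  \;=\;\int_0^{2\pi}\sol{\sigma,\lambda_0}{x}{2\pi}
  \begin{pmatrix}0&1\\1&0\end{pmatrix}
  \sol{\sigma,\lambda_0}{0}{x}\,v_0\,\dd x
  \;=\;\int_0^{2\pi}\sol{\sigma,\lambda_0}{x}{2\pi}
  \begin{pmatrix}j_0(x)\\\rho_0(x)\end{pmatrix}\dd x,
\end{equation*}
which is precisely the right-hand side of \eqref{eq:resolvent-determine-constant} for the source $(a,b)=(\rho_0,j_0)=\Phi(\lambda_0)v_0$; so the chain condition is exactly $d'(\lambda_0)=0$ and your argument closes. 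One small point worth tidying if you write this up: $\Phi(\lambda_0)$ need not map all of $\C^2$ into $\fsL^2_p$, but its restriction to $\ker M(\sigma,\lambda_0)$ does (periodicity of $j$ forces $\lambda_0\int_\T\rho=0$, and $\lambda_0\neq 0$ for nontrivial $\sigma$), which is all you use.
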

\begin{proof}
  Recall the formula \eqref{eq:resolvent-solution} and
  \eqref{eq:resolvent-determine-constant} for the resolvent map. The
  inverse of \(M\) can be written as product of \(\det(M)^{-1}\) and
  the adjugate. The spectral projection \(\Pi_{\lambda_0}\) to
  \(\lambda_0\) is given by the contour integral
  \[
    \int_{C} R(\lambda, \sigma) \mathrm{d}\lambda,
  \]
  where $C$ is a simple curve separating $\lambda_0$ from the rest of
  the specturum. This integral can be computed using the Laurent
  series. This gives a one-dimensional image if and only if
  \(\det(M)\) has a simple root.
\end{proof}
\begin{remark}
  The fact that we have only finitely many eigenvalues in any strip
  $a \leq \Re (\lambda) \leq b$ with \(a > -\| \sigma \|_1/(4\pi)\)
  means that every eigenvalue is separated from the rest of the
  spectrum.
\end{remark}

This allows us to determine \(M\) around an eigenvector.
\begin{lemma}
  \label{thm:tilde-m-simple-eigenvector}
  Suppose that \(\lambda_0 \in \C\) is a simple eigenvalue. Then let
  \((\rho_0,j_0)\) be in \(\ker M(\sigma,\lambda_0)\) and normalised
  to \(\|(\rho_0,j_0)\|_2=1\). In the basis
  \begin{equation*}
    V_1 =
    \begin{pmatrix}
      \rho_0 \\ j_0
    \end{pmatrix}
    \qquad\text{and}\qquad
    V_2 =
    \begin{pmatrix}
      -\conj{j_0} \\ \conj{\rho_0}
    \end{pmatrix}
  \end{equation*}
  the operator \(M(\sigma,\lambda)\) takes the form
  \begin{equation*}
    \tilde{M}(\sigma,\lambda_0+\delta\lambda) :=
    \begin{pmatrix}
      O(\delta \lambda)
      & b + O(\delta \lambda) \\
      c \; \delta \lambda + O((\delta \lambda)^2)
      & O(\delta \lambda)
    \end{pmatrix}
  \end{equation*}
  for small \(\delta \lambda\) and constants \(b,c \not = 0\).
\end{lemma}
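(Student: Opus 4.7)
The plan is to compute each entry $\tilde M_{ij}(\lambda_0+\delta\lambda) = \sip{V_i}{M(\sigma,\lambda_0+\delta\lambda) V_j}$ using the analyticity of $M$ from \cref{thm:continuity-m-sol}, combined with a Liouville-type identity that forces $\tr M(\sigma,\lambda_0)=0$. Specifically, the coefficient matrix in \eqref{eq:resolvent-homogeneous-dx} has zero trace, so Abel's formula gives $\det \sol{\sigma,\lambda}{0}{2\pi} = 1$ for every $\lambda \in \C$. Writing $S = \sol{\sigma,\lambda_0}{0}{2\pi}$ and using the $2\times 2$ identity $\det(I-S)=1-\tr S+\det S = 2-\tr S = \tr(I-S)$, one obtains the key relation $\det M(\sigma,\lambda_0) = \tr M(\sigma,\lambda_0)$. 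Since $\lambda_0$ is an eigenvalue, the left-hand side vanishes, and therefore $\tr M(\sigma,\lambda_0)=0$.

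Next I evaluate the entries at $\delta\lambda=0$. Since $V_1\in\ker M(\sigma,\lambda_0)$, immediately $\tilde M_{11}(\lambda_0)=\tilde M_{21}(\lambda_0)=0$; and because $\{V_1,V_2\}$ is an orthonormal basis of $\C^2$, the trace in this basis equals $\tilde M_{11}(\lambda_0)+\tilde M_{22}(\lambda_0)=\tr M(\sigma,\lambda_0)=0$, so $\tilde M_{22}(\lambda_0)=0$ as well. Analyticity of $M$ in $\lambda$ then supplies the claimed $O(\delta\lambda)$ remainders for these three entries. To see $b:=\tilde M_{12}(\lambda_0)\neq 0$ I invoke \cref{thm:characterisation-simple-eigenvector}: simplicity of $\lambda_0$ means $\det M(\sigma,\cdot)$ has a simple zero at $\lambda_0$, so $M(\sigma,\lambda_0)$ cannot be the zero matrix (otherwise $\det M$ would vanish to second order), hence has rank one. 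Combined with $\tr M(\sigma,\lambda_0)=\det M(\sigma,\lambda_0)=0$, this makes $M(\sigma,\lambda_0)$ nilpotent, so its image lies inside its kernel, namely $\mathrm{span}(V_1)$. Since $V_2\notin\ker M(\sigma,\lambda_0)$, $M(\sigma,\lambda_0)V_2 = b\,V_1$ with $b\neq 0$, giving $\tilde M_{12}(\lambda_0)=b$.

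For the last claim, analyticity gives $\tilde M_{21}(\lambda_0+\delta\lambda) = c\,\delta\lambda + O((\delta\lambda)^2)$ with $c := \partial_\lambda \tilde M_{21}(\lambda_0)$. Orthonormality of $\{V_1,V_2\}$ implies $\det\tilde M = \det M$, and expanding
\[
  \det\tilde M(\sigma,\lambda_0+\delta\lambda) = \tilde M_{11}\tilde M_{22} - \tilde M_{12}\tilde M_{21} = O((\delta\lambda)^2) - (b+O(\delta\lambda))\bigl(c\,\delta\lambda + O((\delta\lambda)^2)\bigr) = -bc\,\delta\lambda + O((\delta\lambda)^2).
\]
Since $\det M$ has a simple zero at $\lambda_0$ and $b\neq 0$, this forces $c\neq 0$. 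The key observation -- and the only non-routine step -- is the Liouville identity $\det M = \tr M$ at an eigenvalue: without it one could conclude neither $\tilde M_{22}(\lambda_0)=0$ nor the nilpotency of $M(\sigma,\lambda_0)$ that trapped $MV_2$ inside $\mathrm{span}(V_1)$.
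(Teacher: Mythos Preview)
Your proof is correct and rests on the same underlying fact as the paper's, namely that the coefficient matrix in \eqref{eq:resolvent-homogeneous-dx} is traceless. The paper exploits this by showing that the Wronskian $\rho_1 j_2 - \rho_2 j_1$ of two solutions is constant in $x$, which yields $\sip{\sol{\sigma,\lambda_0}{0}{2\pi}V_2}{V_2}=1$ and hence $\tilde M_{22}(\lambda_0)=0$ by direct computation; you instead invoke Abel's formula to get $\det \sol{\sigma,\lambda}{0}{2\pi}=1$ and then the $2\times 2$ identity $\det M(\sigma,\lambda)=\tr M(\sigma,\lambda)$ (valid for every $\lambda$), from which $\tilde M_{22}(\lambda_0)=0$ drops out. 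These are equivalent repackagings of the same Liouville identity, and both arguments finish by reading off $b,c\neq 0$ from the simple-zero condition of \cref{thm:characterisation-simple-eigenvector}; your version has the minor bonus of making the nilpotency of $M(\sigma,\lambda_0)$, and hence the reason $b\neq 0$, fully explicit.
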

\begin{proof}
  By construction \(V_1\) and \(V_2\) form an orthonormal basis. Using
  the solution operator \(\sol{\sigma,\lambda_0}{\cdot}{\cdot}\) from
  \cref{def:solution-dx} we define
  \begin{equation}
    \label{eq:rho-j-1}
    \begin{pmatrix}
      \rho_1(x) \\ j_1(x)
    \end{pmatrix}
    = \sol{\sigma,\lambda_0}{0}{x}
    \begin{pmatrix}
      \rho_0 \\ j_0
    \end{pmatrix}
  \end{equation}
  and
  \begin{equation}
    \label{eq:rho-j-2}
    \begin{pmatrix}
      \rho_2(x) \\ j_2(x)
    \end{pmatrix}
    = \sol{\sigma,\lambda_0}{0}{x}
    \begin{pmatrix}
      -\conj{j_0} \\ \conj{\rho_0}
    \end{pmatrix}.
  \end{equation}
  The idea of the Wronskian is to consider \(\rho_1(x)j_2(x)-\rho_2(x)j_1(x)\)
  and by \eqref{eq:resolvent-homogeneous-dx} we find
  \begin{equation*}
    \frac{\dd}{\dd x}
    \Big(
      \rho_1(x)j_2(x)-\rho_2(x)j_1(x)
    \Big)
    = 0.
  \end{equation*}
  As \((\rho_0,j_0) \in \ker M(\sigma,\lambda_0)\) we have that
  \((\rho_1(2\pi),j_1(2\pi))=(\rho_0,j_0)\). For \(V_2\) we find with
  the Wronskian
  \begin{equation*}
    \begin{split}
      \sip{
        \begin{pmatrix}
          \rho_2(2\pi) \\ j_2(2\pi)
        \end{pmatrix}
      }{V_2}
      &= \rho_0\, j_2(2\pi) - j_0\, \rho_2(2\pi)
      = \rho_1(2\pi)\, j_2(2\pi) - j_1(2\pi)\, \rho_2(2\pi) \\
      &= \rho_1(0) j_2(0) - \rho_2(0) j_1(0) = |\rho_0|^2 + |j_0|^2 = 1.
    \end{split}
  \end{equation*}
  Hence we have found that \(\ip{M(\sigma,\lambda_0)V_2}{V_2}=0\).
  Further recalling that \(V_1 \in \ker M(\sigma,\lambda_0)\) and that
  \(M\) and \(\tilde{M}\) are analytic with respect to \(\lambda\)
  shows that
  \begin{equation*}
    \tilde{M}(\sigma,\lambda_0+\delta\lambda) =
    \begin{pmatrix}
      O(\delta \lambda)
      & b + O(\delta \lambda) \\
      c \; \delta \lambda + O((\delta \lambda)^2)
      & O(\delta \lambda)
    \end{pmatrix}
  \end{equation*}
  for some constants \(b,c \in \C\). By
  \cref{thm:characterisation-simple-eigenvector}, the determinant
  \(M(\sigma,\lambda)=\tilde{M}(\sigma,\lambda)\) must have a single
  root at \(\lambda_0\) so that \(b,c \not = 0\).
\end{proof}
We can now prove the perturbation result.
\begin{proof}[Proof of \cref{thm:perturbation}]
  We argue by contradiction and assume that for a
  \(\sigma \in \fsL^\infty\) the spectral gap is determined by the
  simple eigenvalue \(\lambda_0\).

  For a perturbation \(\eta \in \fsL^\infty\), we then find for
  \(M(\sigma,\lambda)\) around \(\lambda_0\) in the form of
  \(\tilde{M}(\sigma,\lambda)\) defined in
  \cref{thm:tilde-m-simple-eigenvector} that
  \begin{equation*}
    \tilde{M}(\sigma+\epsilon \eta,\lambda_0+\delta\lambda) =
    \begin{pmatrix}
      O(\epsilon,\delta \lambda)
      & b + O(\epsilon,\delta \lambda) \\
      c \; \delta \lambda + d \epsilon + O((\epsilon,\delta \lambda)^2)
      & O(\epsilon,\delta \lambda)
    \end{pmatrix}.
  \end{equation*}
  By Duhamel's principle we have that
  \[
    \sol{\sigma + \epsilon \eta ,\lambda_0}{0}{x} =
    \sol{\sigma,\lambda_0}{0}{x} + \int_0^x
    \sol{\sigma,\lambda_0}{y}{x}\epsilon \eta(y)
    \begin{pmatrix}
      0 & -1 \\ 0 & 0
    \end{pmatrix}
    \sol{\sigma+\epsilon\eta,\lambda_0}{0}{y}\, \dd y.
  \]
  And the bottom left hand term of the matrix
  $\sol{\sigma+\epsilon\eta,\lambda_0}{0}{2\pi}$ will be
  $\ip{ \sol{\sigma + \epsilon \eta ,\lambda_0}{0}{2\pi}V_1}{V_2}$.
  Repeating the Wronskian argument as in the proof of
  \cref{thm:tilde-m-simple-eigenvector} shows that
  \begin{equation*}
    d
    = \int_0^{2\pi} \eta(y)
    \ip{\sol{\sigma,\lambda_0}{x}{2\pi}
      \begin{pmatrix}
        0 & -1 \\ 0 & 0
      \end{pmatrix}
      \sol{\sigma,\lambda_0}{0}{x}}{V_2}\, \dd y
    = \int_0^{2\pi}
    \begin{pmatrix}
      -j_0 \\ \rho_0
    \end{pmatrix}
    \cdot
    \sol{\sigma,\lambda_0}{y}{2\pi}
    \begin{pmatrix}
      -j_1(y) \\ 0
    \end{pmatrix}
    \dd y
    =  \int_0^{2\pi} j_1(y)^2\, \eta(y)\, \dd y
  \end{equation*}
  with \(j_1(y)\) from \eqref{eq:rho-j-1}\footnote{One could also
    prove \(c=\int_0^{2\pi} (j_1(y)^2 - \rho_1(y)^2)\, \dd y\).}. As
  the eigenvalue is determined by \(\det \tilde{M}\) this shows that
  the eigenvalue behaves as
  \begin{equation*}
    \lambda_0 - \frac{d}{c} \epsilon + O(\epsilon^2).
  \end{equation*}
  Hence if we can find some \(\eta\) such that \(\Re(d/c) \not =0\),
  we can choose a small \(\epsilon \in \R\) so that
  \(\sigma+\epsilon\eta\) would have a bigger spectral gap. This shows
  the result if \(\sigma+\epsilon\eta\) is a valid jump rate, i.e.\
  non-negative.

  As \(\sigma\) is a non-trivial jump rate, it is strictly positive in
  a subset \(I\) of positive measure. If \(j_1(y)^2\) has not constant
  complex phase, we can always construct \(\eta \in \fsL^\infty\) with
  support in \(I\) such that \(\Re(d/c) \not = 0\) and thus
  \(\sigma+\epsilon\eta\) yields a valid perturbation of the jump
  rate. In the case that \(\lambda \in \R\), the real and imaginary
  part decouple and the constants \(d\) and \(c\) must be real. In the
  other case the system is translation invariant and thus the spectral
  gap cannot determined by a simple eigenvalue.
\end{proof}

\section{Global optimum by associated Schrödinger equation}
\label{sec:schroedinger}

The eigenvector equation \eqref{eq:condition-eigenvector} can be
written as the following second order equation
\begin{equation}
  \label{eq:eigenvalue-second-order-j}
  - \partial_x^2 j + \lambda (\lambda+\sigma) j = 0.
\end{equation}
For a fixed \(\lambda \in \R\) we then consider the Hamiltonian
\(H_{\sigma,\lambda}\) by
\begin{equation}
  \label{eq:definition-hamiltion-sigma-lambda}
  H_{\sigma,\lambda} j
  = -\partial_x^2 j + \lambda (\lambda+\sigma(x))\, j.
\end{equation}
By the construction, our generator \(A_{\sigma}\) has an eigenvector
with eigenvalue \(\lambda\) if \(H_{\sigma,\lambda}\) has a zero
eigenvector. By looking at the evolution of the spectrum, similar to
\textcite{cox-zuazua-1994-rate-decay-damped-string}, we can show a
slowly decaying eigenvector in the diffusive regime.
\begin{prop}
  \label{thm:schroedinger}
  Suppose that \(\|\sigma\|_1 > 4\pi\) and let
  \begin{equation*}
    \lambda_s = -\frac{\|\sigma\|_1}{4\pi}
    + \sqrt{\left(\frac{\|\sigma\|_1}{4\pi}\right)^2 -1}.
  \end{equation*}
  Then there exists a \(\lambda \in [\lambda_s,0)\) such that
  \(\lambda\) is an eigenvalue of the generator \(A_{\sigma}\) from
  \eqref{eq:definition-generator} of the Goldstein-Taylor system
  \eqref{eq:goldstein-taylor-wave}.
\end{prop}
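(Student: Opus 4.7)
My plan is to reduce the problem to a self-adjoint spectral question for the Schrödinger operator $H_{\sigma,\lambda}$ of \eqref{eq:definition-hamiltion-sigma-lambda}: for $\lambda\ne 0$, eliminating $\rho = -\partial_x j/\lambda$ in \eqref{eq:condition-eigenvector} shows that $\lambda$ is an eigenvalue of $A_\sigma$ if and only if $0$ lies in the spectrum of $H_{\sigma,\lambda}$ on $\fsL^2(\T)$ with periodic boundary conditions, with associated eigenvector recovered by setting $\rho = -\partial_x j/\lambda$ for $j \in \ker H_{\sigma,\lambda}$. The task therefore becomes to exhibit some $\lambda \in [\lambda_s, 0)$ for which $H_{\sigma,\lambda}$ has $0$ as an eigenvalue.

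Denote by $\mu_0(\lambda) \le \mu_1(\lambda) \le \cdots$ the ordered eigenvalues of the self-adjoint operator $H_{\sigma,\lambda}$, each continuous in $\lambda$. At $\lambda = 0$ the operator reduces to $-\partial_x^2$ whose spectrum on $\T$ is $\{0,1,1,4,4,\ldots\}$, so in particular $\mu_1(0)=1$. The heart of the argument is the inequality
\[
  \mu_1(\lambda_s) \le 0;
\]
once this is established, continuity combined with the intermediate value theorem produces $\lambda^* \in [\lambda_s, 0)$ with $\mu_1(\lambda^*)=0$, which by the reconstruction above is the desired eigenvalue of $A_\sigma$.

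To prove $\mu_1(\lambda_s)\le 0$ I apply the min-max principle on a two-dimensional trial space $\operatorname{span}(1,\psi)$. The constant function has Rayleigh quotient $\lambda_s^2 + \lambda_s\|\sigma\|_1/(2\pi) = -1$ by the defining identity for $\lambda_s$. For $\psi$ I choose a unit-norm first-Fourier-mode function $\psi\in\operatorname{span}(\cos x,\sin x)$ pointing along the eigenvector of the $2\times 2$ second-moment matrix $M_{ij} = \pi^{-1}\int_\T\sigma\phi_i\phi_j\,\dd x$ (with $\phi_1=\cos x,\phi_2=\sin x$) corresponding to its larger eigenvalue. Since $\operatorname{tr}M = \|\sigma\|_1/\pi$, this larger eigenvalue is at least $\|\sigma\|_1/(2\pi)$, and combined with $\lambda_s<0$ this gives
\[
  R(\psi) \;\le\; 1 + \lambda_s^2 + \lambda_s\tfrac{\|\sigma\|_1}{2\pi} \;=\; 0.
\]
The remaining cross term $\lambda_s\int\sigma\psi$ in the reduced $2\times 2$ matrix is handled by the translation symmetry of the spectrum of $A_\sigma$ under $\sigma\mapsto\sigma(\cdot+c)$: after replacing $\sigma$ by an appropriate translate a short Fourier-coefficient calculation produces a shift that kills the cross term while preserving the bound on $R(\psi)$ (which depends only on the translation-invariant modulus $|\hat\sigma_2|$). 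The reduced matrix is then diagonal with both entries $\le 0$, yielding $\mu_1(\lambda_s)\le 0$.

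The main obstacle is the simultaneous balancing of the one-parameter translation freedom with the direction of $\psi$ so that the cross-term cancellation and the sign condition $R(\psi)\le 0$ hold together; the resolution reduces to a pair of trigonometric identities in the phases of $\hat\sigma_1$ and $\hat\sigma_2$, exploiting in an essential way the bound $|\lambda_s|\le 1$ (valid whenever $\|\sigma\|_1\ge 4\pi$). This is exactly the point where the \emph{translation symmetry} and the use of the \emph{second eigenvalue} of the associated Schrödinger operator, emphasised in the introduction as the novelty compared to \cite{cox-zuazua-1994-rate-decay-damped-string}, are really brought to bear.
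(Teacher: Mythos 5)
Your plan follows the paper's architecture quite closely: reduce the claim to producing a zero eigenvalue of the Schr\"odinger operator $H_{\sigma,\lambda}$, observe that the second eigenvalue (your $\mu_1$, the paper's $\mu_2$) equals $1$ at $\lambda=0$, aim to show it is $\le 0$ at $\lambda=\lambda_s$ via a two-dimensional trial space containing the constant function and a first Fourier mode, and conclude by continuity and the intermediate value theorem. Your computation that the constant has Rayleigh quotient $-1$ and your use of $\operatorname{tr}M=\|\sigma\|_1/\pi$ to get $R(\psi)\le 0$ are correct and are small refinements of what the paper does (the paper instead fixes a phase $\phi$ so that the two diagonal entries of your $M$ coincide, both equal to $\|\sigma\|_1/(2\pi)$, and takes $\psi=\sin(\cdot-\phi)$).

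The step you flag as the ``main obstacle'' — the off-diagonal entry $\langle 1, H_{\sigma,\lambda_s}\psi\rangle = \lambda_s\int_\T\sigma\psi\,\dd x$ of the restricted form — is, however, a genuine gap, and the translation argument you sketch to close it cannot work. Track how the relevant objects transform under $\sigma\mapsto\sigma(\cdot+c)$: the first-moment vector $v=(\int\sigma\cos x\,\dd x,\,\int\sigma\sin x\,\dd x)$ rotates by $c$, and the matrix $M$ is conjugated by the same rotation, so the direction $w$ of its top eigenvector also rotates by $c$. The cross term is, up to a scalar, the Euclidean inner product $\ip{w}{v}$, which is therefore \emph{translation-invariant}: no translate of $\sigma$ makes it vanish unless it already does. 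Concretely, in an orthonormal basis of $\operatorname{span}(1,\psi)$ the restricted form is
$\left(\begin{smallmatrix}-1 & \tau \\ \tau & r\end{smallmatrix}\right)$
with $r=R(\psi)\le 0$, and its larger eigenvalue is $\le 0$ if and only if $r\le-\tau^2$ — strictly stronger than $r\le 0$ whenever $\tau\neq 0$. Since translation changes neither $r$ nor $\tau$, the two-dimensional trial space $\operatorname{span}(1,\psi)$ does not suffice as it stands; some further input (a three-dimensional trial space with more careful bookkeeping of the interplay between $\hat\sigma_1$ and $\hat\sigma_2$, a Sturm-oscillation argument, or a genuinely different device) is needed to close the estimate $\mu_2(\lambda_s)\le 0$.

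You should be aware that the paper's own proof contains exactly the same lacuna: the constraint $\int_\T\sigma(x)\cos(2(x-\phi))\,\dd x=0$ fixes the phase of the second Fourier mode of $\sigma$, but the cross term $\langle 1, H_{\sigma,\lambda_s}\sin(\cdot-\phi)\rangle = \lambda_s\int_\T\sigma(x)\sin(x-\phi)\,\dd x$ involves the first mode and is generically nonzero, so the inference ``hence $\mu_2(\lambda_s)\le 0$'' from the two non-positive diagonal Rayleigh quotients is not justified as written. Your proposal at least recognises the problem explicitly, which is to your credit; but the resolution it offers is illusory.
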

\begin{proof}
  The Hamiltonian \(H_{\sigma,\lambda}\) from
  \eqref{eq:definition-hamiltion-sigma-lambda} is for
  \(\lambda \in \R\) a self-adjoint operator and has real eigenvalues
  \(\mu_1,\mu_2,\dots\) (chosen in increasing order) converging to
  \(\infty\).

  For \(\lambda=0\), the Hamiltonian \(H_{\sigma,\lambda}\) is just
  the Laplacian so that \(\mu_1 = 0\) and \(\mu_2 = 1\). By
  considering the perturbation around \(\lambda=0\), we find for a
  small \(|\lambda_s| > \epsilon > 0\) that
  for \(\lambda = 0 - \epsilon\) that \(\mu_1 < 0\) and \(\mu_2 > 0\).

  For the given \(\sigma\) find a shift \(\phi\) such that
  \begin{equation*}
    \int_0^{2\pi} \sigma(x) \cos(2(x-\phi))\, \dd x = 0
  \end{equation*}
  and consider the two test functions
  \begin{equation*}
    j_1(x) = 1
  \end{equation*}
  and
  \begin{equation*}
    j_2(x) = \sin(x-\phi)
  \end{equation*}
  which are linearly independent.

  For these test functions we find
  \begin{equation*}
    \ip{j_1}{H_{\sigma,\lambda_s}j_1} =
    \lambda_s (2\pi \lambda_s + 2 \|\sigma\|_1)
    < 0
  \end{equation*}
  and
  \begin{equation*}
    \ip{j_2}{H_{\sigma,\lambda_s}j_2} =
    \frac{2\pi}{2}
    \left(
      1 + \lambda_s
      \left(\lambda_s + \frac{\| \sigma \|_1}{2\pi}\right)
    \right)
    \le 0.
  \end{equation*}

  Hence we must have \(\mu_2(\lambda_s) \le 0\). As
  \(\mu_2(0-\epsilon) > 0\) this means that there exist some
  \(\lambda \in [\lambda_s,-\epsilon)\) such that \(\mu_2 = 0\) and by
  the previous discussion this means that there exist an eigenvalue
  for the generator \(A_{\sigma}\).
\end{proof}

We can now conclude that the constant \(\sigma=2\) yields the fastest
decay rate.
\begin{proof}[Proof of \cref{thm:global-optimal}]
  By \cref{thm:spectrum-constant-sigma} the choice \(\sigma=2\) yields
  the spectral gap \(1\).

  For another \(\sigma\) with \(\| \sigma \|_1 \le 4\pi\), the bound
  from the velocity relaxation in \cref{thm:semigroup-setup} shows
  that the spectral gap is not bigger.

  For another \(\sigma\) with \(\| \sigma \|_1 > 4\pi\), we can apply
  \cref{thm:schroedinger} to show that the spectral gap needs to be
  worse than the constant case \(\sigma=2\).
\end{proof}

\section*{Acknowledgements}

The authors gratefully acknowledge the support of the Hausdorff
Research Institute for Mathematics (Bonn), through the Junior
Trimester Program on Kinetic Theory. The authors thank Clément
Mouhot, Laurent Desvillettes, Iván Moyano for the discussions on this
project.

\setlength\bibitemsep{0pt}
\renewcommand*{\bibfont}{\small}
\printbibliography
\end{document}